\numberwithin{equation}{section}
\newcommand{\ZFC}{{\rm ZFC}}
\renewcommand{\P}{{\mathbb P}}
\newcommand{\Q}{{\mathbb Q}}
\newcommand{\R}{{\mathbb R}}
\renewcommand{\H}{{\mathbb H}}
\newcommand{\forces}{\Vdash}
\newcommand{\forced}{\Vdash}
\newcommand{\restrict}{\upharpoonright}
\newcommand{\<}{\langle}
\renewcommand{\>}{\rangle}
\newcommand{\st}{:}
\newcommand{\NS}{{\mathop{\rm NS}}}
\newcommand{\Tr}{{\mathop{\rm Tr}}}
\newcommand{\wc}{{\mathop{\rm wc}}}
\newcommand{\Refl}{{\mathop{\rm Refl}}}
\renewcommand{\and}{\mathop{\&}}
\newtheorem{theorem}{Theorem}
\newtheorem{lemma}[theorem]{Lemma}
\newtheorem*{theorem3}{Theorem 3}
\theoremstyle{definition}
\newtheorem{definition}{Definition}
\newtheorem{question}{Question}
\newtheorem{remark}{Remark}
\subjclass[2000]{03E35, 03E55}
\date{\today}
\begin{document}

\title[The weakly compact reflection principle]{The weakly compact reflection principle need not imply a high order of weak compactness}

\author[Brent Cody]{Brent Cody}
\address[Brent Cody]{ 
Virginia Commonwealth University,
Department of Mathematics and Applied Mathematics,
1015 Floyd Avenue, PO Box 842014, Richmond, Virginia 23284, United States
} 
\email[B. ~Cody]{bmcody@vcu.edu} 
\urladdr{http://www.people.vcu.edu/~bmcody/}

\author[Hiroshi Sakai]{Hiroshi Sakai}
\address[Hiroshi Sakai]{ 
Kobe University,
Graduate School of System Informatics,
1-1 Rokkodai, Nada, Kobe 657-8501, Japan
} 
\email[H. ~Sakai]{hsakai@people.kobe-u.ac.jp} 
\urladdr{http://www2.kobe-u.ac.jp/~hsakai/}

\begin{abstract}
The \emph{weakly compact reflection principle} $\Refl_{\wc}(\kappa)$ states that $\kappa$ is a weakly compact cardinal and every weakly compact subset of $\kappa$ has a weakly compact proper initial segment. The weakly compact reflection principle at $\kappa$ implies that $\kappa$ is an $\omega$-weakly compact cardinal. In this article we show that the weakly compact reflection principle does not imply that $\kappa$ is $(\omega+1)$-weakly compact. Moreover, we show that if the weakly compact reflection principle holds at $\kappa$ then there is a forcing extension preserving this in which $\kappa$ is the least $\omega$-weakly compact cardinal. Along the way we generalize the well-known result which states that if $\kappa$ is a regular cardinal then in any forcing extension by $\kappa$-c.c. forcing the nonstationary ideal equals the ideal generated by the ground model nonstationary ideal; our generalization states that if $\kappa$ is a weakly compact cardinal then after forcing with a `typical' Easton-support iteration of length $\kappa$ the weakly compact ideal equals the ideal generated by the ground model weakly compact ideal.
\end{abstract}

\subjclass[2010]{Primary 03E35; Secondary 03E55}

\keywords{}

\maketitle

%\tableofcontents

\section{Introduction}

% Several recent results have shown that there is a strong analogy between the non-stationary ideal on inaccessible cardinals, the weakly compact ideal and more generally the $\Pi^1_n$-indescribable ideal.\footnote{The $\Pi^1_n$-indescribable ideal $\Pi^1_n(\kappa)$, defined below, consists of the collection of non-$\Pi^1_n$-indescribable subsets of $\kappa$. The weakly compact ideal is $\Pi^1_1(\kappa)$.} For example, Baumgartner-Taylor-Wagon \cite{MR0505505} proved that if $\kappa$ is greatly Mahlo, i.e. $\kappa^+$-Mahlo, then the nonstationary ideal restricted to the regulars $\NS_\kappa\restrict\REG$ is not saturated. Jech and Woodin \cite{MR805967} showed that if $\kappa$ has some degree of Mahloness less than $\kappa^+$ then, relative to large cardinals, it is consistent that $\NS_\kappa\restrict\REG$ is saturated; specifically, they proved that for $\alpha<\kappa^+$, $\kappa$ being measurable of order $\alpha$ is equiconsistent with $\kappa$ being $\alpha$-Mahlo and having $\NS_\kappa\restrict\REG$ saturated. Hellsten \cite{MR2653962} extended these results to the weakly compact ideal by proving that (1) if $\kappa$ is greatly weakly compact then the weakly compact ideal $\Pi^1_1(\kappa)$ is not saturated, and (2) relative to the existence of a measurable cardinal of order $\alpha<\kappa^+$ it is consistent that $\kappa$ is $\alpha$-weakly compact and the weakly compact ideal $\Pi^1_1(\kappa)$ is saturated. It is currently unknown whether or not the $\Pi^1_n$-indescribable ideal can be saturated relative to large cardinals for $n>1$.

For a regular cardinal $\kappa$, the \emph{stationary reflection principle} $\Refl(\kappa)$ states that every stationary subset of $\kappa$ has a stationary proper initial segment. If we modify the statement of $\Refl(\kappa)$ to reference ideals other than the nonstationary ideals on cardinals $\gamma\leq\kappa$ we obtain new reflection principles. The \emph{$\Pi^1_n$-reflection principle} $\Refl_{n}(\kappa)$, independently defined by the authors of this paper, states that $\kappa$ is a $\Pi^1_n$-indescribable cardinal and every $\Pi^1_n$-indescribable subset of $\kappa$ has a $\Pi^1_n$-indescribable proper initial segment. These reflection principles $\Refl_n(\kappa)$ can be seen to generalize a certain type of stationary reflection principle as follows. Since a set $S\subseteq\gamma$ is $\Pi^1_0$-indescribable if and only if $\gamma$ is inaccessible and $S$ is stationary \cite{MR2252250}, it follows that $\Refl_0(\kappa)$ holds if and only if $\kappa$ is inaccessible and for every stationary $S\subseteq\kappa$ there is an inaccessible $\gamma<\kappa$ such that $S\cap\gamma$ is stationary in $\gamma$. Thus, $\Refl_n(\kappa)$ is a generalization of a natural stationary reflection principle. Since a set $S\subseteq\gamma$ being $\Pi^1_n$-indescribable is expressible by a $\Pi^1_{n+1}$-sentence over $(V_\gamma,\in,S)$, it follows that if $\kappa$ is a $\Pi^1_{n+1}$-indescribable cardinal then $\Refl_n(\kappa)$ holds. 

The second author defined the $\Pi^1_n$-reflection principles in order to generalize Jensen's characterization of weak compactness in $L$. Recall that Jensen proved \cite{MR0309729} that in $L$, a cardinal $\kappa$ is weakly compact if and only if the stationary reflection principle holds at $\kappa$. In \cite{MR3416912}, the second author and Bagaria-Magidor, showed that Jensen's characterization of weak compactness in $L$ can be generalized: in $L$ a cardinal $\kappa$ is $\Pi^1_{n+1}$-indescribable if and only if the $\Pi^1_n$-reflection principle holds at $\kappa$.\footnote{The second author proved that in $L$, a cardinal $\kappa$ is $\Pi^1_{n+1}$-indescribable if and only if the $\Pi^1_n$-reflection principle holds. Independently, Bagaria-Magidor showed that in $L$, $\kappa$ is a $\Pi^1_{n+1}$-indescribable cardinal if and only if $\kappa$ is what they call ``$n$-stationary''; this is the version appearing in \cite{MR3416912}. Thus, in $L$, a cardinal $\kappa$ is $\Pi^1_{n+1}$-indescribable if and only if $\kappa$ is $n$-stationary if and only if the $\Pi^1_n$-reflection principle holds.}

It follows from Jensen's characterization of weak compactness in $L$ mentioned above, that a cardinal $\kappa$ being greatly Mahlo need not imply that the stationary reflection principle $\Refl(\kappa)$ holds; indeed in $L$, the stationary reflection principle $\Refl(\kappa)$ fails if $\kappa$ is the least greatly Mahlo cardinal. Using standard methods one can prove this consistency result using a forcing which adds a non-reflection stationary set: if $\kappa$ is greatly Mahlo then there is a forcing extension in which $\Refl(\kappa)$ fails and $\kappa$ remains greatly Mahlo.\footnote{The first author would like to thank James Cummings for pointing this out.} The notions of great Mahloness and order of a stationary set can be generalized to yield notions of great $\Pi^1_n$-indescribability and orders of $\Pi^1_n$-indescribable sets. See Definition \ref{definition_alpha_weakly_compact} and the surrounding discussion for a review of the relevant material on orders of $\Pi^1_1$-indescribability; for more on orders of $\Pi^1_n$-indescribability consult \cite{Cody:AddingANonReflectingWeaklyCompactSet} and \cite{MR2252250}. The theorem of Bagaria-Magidor-Sakai mentioned above shows that the fact that $\Refl(\kappa)$ does not hold at the least greatly Mahlo cardinal can be generalized to $\Pi^1_n$-indescribability: in $L$, the $\Pi^1_n$-reflection principle $\Refl_n(\kappa)$ fails if $\kappa$ is the least greatly $\Pi^1_n$-indescribable cardinal. Thus $\kappa$ being greatly $\Pi^1_n$-indescribable need not imply the $\Pi^1_n$-reflection principle. The first author showed that in the case where $n=1$, the forcing that adds a non-reflection stationary set can be generalized to show that if $\kappa$ is $\xi$-$\Pi^1_1$-indescribable where $\xi<\kappa^+$ is some fixed ordinal then there is a forcing which adds a \emph{non-reflecting $\Pi^1_1$-indescribable subset of $\kappa$},\footnote{A set $S\subseteq\kappa$ is called a \emph{non-reflecting $\Pi^1_n$-indescribable subset of $\kappa$} if $S$ is a $\Pi^1_n$-indescribable subset of $\kappa$ and for every $\gamma<\kappa$ the set $S\cap \gamma$ is not a $\Pi^1_n$-indescribable subset of $\gamma$. See \cite{Cody:AddingANonReflectingWeaklyCompactSet} for more on non-reflecting $\Pi^1_n$-indescribable sets.} thus killing $\Refl_1(\kappa)$, and preserves the $\xi$-$\Pi^1_1$-indescribability of $\kappa$. \footnote{It is not known whether or not there is a forcing which adds a non-reflecting $\Pi^1_1$-indescribable subset to $\kappa$ while preserving the great $\Pi^1_1$-indescribability of $\kappa$. See Section \ref{section_questions}.}

%Hellsten showed \cite{MR2252250} that the $\Pi^1_n$-indescribable subsets of a $\Pi^1_n$-indescribable cardinal form a hierarchy similar to the hierarchy of stationary subsets of a regular cardinal defined by Jech \cite{MR763898}. The following definition is implicit in \cite{MR2252250} and explicit in \cite{Cody:AddingANonReflectingWeaklyCompactSet}: a subset $S\subseteq\kappa$ of a $\Pi^1_n$-indescribable cardinal is said to be \emph{$\alpha$-$\Pi^1_n$-indescribable} where $\alpha<\kappa^+$ if and only if $S$ is $\Pi^1_n$-indescribable and for every $\beta<\alpha$ applying the generalized Mahlo operation $\Tr_n$ to $S$, $\beta$-many times, produces a $\Pi^1_n$-indescribable set (see Definition \ref{definition_alpha_weakly_compact} and the surrounding discussion for a review of the relevant material on orders of weakly compact sets). 

Mekler and Shelah also observed that $\Refl_0(\kappa)$ implies that $\kappa$ is $\omega$-Mahlo and proved \cite{MR1029909} that $\Refl(\kappa)$ need not imply that $\kappa$ is $(\omega+1)$-Mahlo\footnote{This also follows from a result of Magidor \cite{MR683153} which states that $\Refl(\aleph_{\omega+1})$ is consistent relative to a supercompact cardinal. We emphasize Mekler and Shelah's proof because our proof will be a generalization of theirs.} by showing that if $\Refl(\kappa)$ holds then there is a forcing extension in which $\Refl(\kappa)$ holds at the least $\omega$-Mahlo cardinal. The first author observed that the $\Pi^1_n$-reflection principle $\Refl_n(\kappa)$ implies that $\kappa$ is $\omega$-$\Pi^1_n$-indescribable and asked \cite{Cody:AddingANonReflectingWeaklyCompactSet}: can the $\Pi^1_n$-reflection principle hold at the least $\omega$-$\Pi^1_n$-indecribable? In this article we answer this question affirmatively in the case where $n=1$. We refer to $\Refl_1(\kappa)$ as the weakly compact reflection principle and write $\Refl_{\wc}(\kappa)$ instead of $\Refl_1(\kappa)$.\footnote{It is not clear whether the methods in this article will provide an answer to this question for $n>1$. For a more detailed discussion of this and other questions, see Section \ref{section_questions} below.}

%and furthermore proved \cite{Cody:AddingANonReflectingWeaklyCompactSet} that in the case where $n=1$ the converse of this can fail. Indeed, the first author proved that if $\Refl_1(\kappa)$ holds and $\kappa$ is $\alpha$-weakly compact where $\alpha<\kappa^+$ then there is a forcing extension in which there is a non-reflecting weakly compact subset of $\kappa$ and $\kappa$ remains $\alpha$-weakly compact. When $n=1$, we refer to $\Refl_1(\kappa)$ as the weakly compact reflection principle and write $\Refl_{\wc}(\kappa)$ instead of $\Refl_1(\kappa)$.

\begin{theorem}\label{theorem_main_theorem} 
Suppose the weakly compact reflection principle $\Refl_\wc(\kappa)$ holds. Then there is a forcing extension in which $\Refl_\wc(\kappa)$ holds and $\kappa$ is the least $\omega$-weakly compact cardinal.
\end{theorem}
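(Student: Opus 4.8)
\emph{Proof proposal.}
The plan is to generalize the Mekler--Shelah argument \cite{MR1029909} from $\omega$-Mahloness to $\omega$-weak compactness. Working in a model $V$ of $\Refl_\wc(\kappa)$, I would build an Easton-support iteration $\P_\kappa = \langle (\P_\gamma, \dot\Q_\gamma) : \gamma < \kappa \rangle$ in which $\dot\Q_\gamma$ is trivial unless $\gamma$ is weakly compact in $V$, and is otherwise a ${<}\gamma$-strategically closed forcing of size at most $2^{<\gamma}$, chosen so that $\P_\kappa$ is a ``typical'' Easton-support iteration in the sense of the abstract, and so that in $V^{\P_{\gamma+1}}$ the cardinal $\gamma$ is still weakly compact but fails to be $n$-weakly compact for some $n < \omega$. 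Letting $G \subseteq \P_\kappa$ be $V$-generic, the claim is that $V[G]$ witnesses the theorem.

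Next I would record the structural properties of $\P_\kappa$: it has the $\kappa$-chain condition and cardinality $\kappa$, preserves cardinals and cofinalities, and for each inaccessible $\gamma < \kappa$ factors as $\P_\kappa \cong \P_{\gamma+1} * \dot\P_{[\gamma+1,\kappa)}$ where the tail $\dot\P_{[\gamma+1,\kappa)}$ is, in $V^{\P_{\gamma+1}}$, closed enough to add no new subsets of $\gamma$. The weak compactness of $\kappa$ is preserved by the usual lifting argument: given $A \subseteq \kappa$ in $V[G]$, pick in $V$ a weak-compactness embedding $j\colon M \to N$ with $\crit(j) = \kappa$, ${}^{<\kappa}M \subseteq M$, $V_\kappa \subseteq M$ and $A \in M$, and lift it to $j\colon M[G] \to N[j(G)]$ by using the strategic closure of $j(\P_\kappa)$ past $\kappa$ to build a master condition. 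The key external input is the paper's promised generalization of the result that $\kappa$-c.c.\ forcing does not enlarge the nonstationary ideal: since $\P_\kappa$ is a typical Easton-support iteration of length $\kappa$, in $V[G]$ the weakly compact ideal on $\kappa$ is the ideal generated by $(\Pi^1_1(\kappa))^V$; and, applying the same fact to the initial segment $\P_\gamma$ at each weakly compact $\gamma < \kappa$ and then tracking the effect of $\dot\Q_\gamma$ and of the ($\mathcal P(\gamma)$-freezing) tail, one controls the weakly compact ideal on every $\gamma < \kappa$ in $V[G]$ --- in particular every $\gamma < \kappa$ not weakly compact in $V$ stays non-weakly-compact in $V[G]$.

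From this the two halves of the conclusion follow. First, \emph{no $\gamma < \kappa$ is $\omega$-weakly compact in $V[G]$}: a $\gamma$ not weakly compact in $V$ was just handled, and for a $\gamma$ weakly compact in $V$, $\dot\Q_\gamma$ arranges in $V^{\P_{\gamma+1}}$ that $\gamma$ is not $n$-weakly compact for some $n<\omega$, a statement which depends only on $\mathcal P(\beta)$ and the weakly compact ideals on $\beta \leq \gamma$, all of which are frozen by the remaining forcing, so the failure of $n$-weak-compactness persists to $V[G]$. Second, \emph{$\Refl_\wc(\kappa)$ holds in $V[G]$}: $\kappa$ is weakly compact there; and given $S \subseteq \kappa$ weakly compact in $V[G]$, the ideal computation shows $S$ is not covered by any ground-model non-weakly-compact set, and a lifting argument from $\Refl_\wc(\kappa)$ in $V$ --- lifting a suitable $V$-embedding witnessing the reflection of a ground-model set associated to $S$, and pulling back --- yields $\gamma < \kappa$ with $S \cap \gamma$ weakly compact in $\gamma$. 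Since $\Refl_\wc$ implies $\omega$-weak compactness, $\kappa$ is $\omega$-weakly compact in $V[G]$, hence the least such cardinal.

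I expect the real difficulty to lie in the construction and analysis of $\dot\Q_\gamma$. Unlike destroying $\Refl_\wc(\gamma)$ by adding a single non-reflecting weakly compact subset of $\gamma$ (which leaves $\gamma$ weakly compact of the same high order), one cannot lower the order of weak compactness of a fixed $\gamma$ ``for free'': forcing $\{\beta < \gamma : \beta \text{ is } n\text{-weakly compact}\}$ to fail $\Pi^1_1$-indescribability in $\gamma$ presupposes that set has already been thinned below $\gamma$. Thus $\dot\Q_\gamma$ must be designed as part of a coherent iteration --- morally, threading a club through $\gamma$ that certifies that the weak-compactness orders of its members are bounded, built on the clubs added at earlier stages --- and one must simultaneously ensure that $\dot\Q_\gamma$ preserves the weak compactness of $\gamma$ (indispensable, since $\Refl_\wc(\kappa)$ requires weakly compact cardinals below $\kappa$), that $\P_\kappa$ remains within the ``typical Easton-support iteration'' template, and that the generics added along the way create no new weakly compact subsets of $\kappa$ --- for which one needs the ideal lemma, whose proof for this iteration is itself a substantial component of the argument.
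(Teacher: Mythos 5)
Your high-level plan matches the paper's: an Easton-support iteration of ${<}\gamma$-strategically closed forcings at weakly compact $\gamma<\kappa$, each killing the $\omega$-weak compactness of $\gamma$ while keeping $\gamma$ weakly compact, with Theorem \ref{theorem_weakly_compact_ideal_after_forcing} controlling the weakly compact ideal. But the two ideas that actually make the argument close are missing, and your placeholders for them would not obviously work. First, the stage-$\gamma$ poset is not ``a club certifying bounded orders of its members'': it is the lottery sum $\bigoplus_{m<\omega}\prod_{n\in[m,\omega)}T^1(\gamma\setminus\dot S^\gamma_n)$, i.e.\ a generically chosen $m$ followed by the \emph{full product} of $1$-club shootings through the complements of $S^\gamma_n$ for all $n\ge m$. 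The lottery over $m$ is forced on you because fixing any single $n$ would destroy all $(n+1)$-weakly compact cardinals below $\kappa$ and hence the $\omega$-weak compactness of $\kappa$ itself; the product over a whole final segment (rather than one $T^1$ at a time) is forced on you by the master-condition argument (Remark \ref{remark_intuitive_comment}). Second, your preservation argument for $\Refl_\wc(\kappa)$ --- ``lift a $V$-embedding witnessing reflection of a ground-model set associated to $S$ and pull back'' --- skips the real obstruction. Reflection in $V$ only gives you a ground-model set $T\supseteq\dot S_G$ with $T\cap\gamma$ weakly compact; to get a weakly compact subset of $\gamma$ \emph{inside} $\dot S_G\cap\gamma$ you need the Gitik--Shelah lemma (Lemma \ref{lemma_gitik_shelah}, applied via Theorem \ref{theorem_weakly_compact_ideal_after_forcing}) to produce a condition $p_\alpha\in\P_\gamma$ forcing $\{\beta\in T\cap\gamma: p_\beta\in\dot G_\gamma\}$ to be weakly compact in $V^{\P_\gamma}$. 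You never invoke this, and the ideal computation alone does not substitute for it.

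Even granting that, the crux is that the reflection point $\gamma$ is itself a weakly compact cardinal at which your iteration performs forcing \emph{designed to destroy} weak compactness of subsets of $\gamma$, so you must show the stage-$\gamma$ forcing can be entered without killing the particular set just obtained. This is Lemma \ref{lemma_hiroshi}, and its proof is exactly where the lottery-sum-of-products earns its keep: the reflected condition in $N$ opts for the product starting at $m$, one opts in $V$ for the product starting at $m+1$, and after shooting (inside $N$) a $1$-club through $\gamma\setminus S^\gamma_m$ the ordinal $\gamma$ is no longer $(m+1)$-weakly compact in $N[\hat G_{j(\gamma)}]$ --- which is precisely what allows $\gamma$ to be appended to each generic $1$-club to form a master condition $\langle C^\gamma_n\cup\{\gamma\}: n\in[m+1,\omega)\rangle$. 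Without this mechanism your sentence ``yields $\gamma<\kappa$ with $S\cap\gamma$ weakly compact'' is an assertion, not a proof. (A smaller slip: your final step for killing $\omega$-weak compactness below $\kappa$ also needs that the sets $S^\gamma_n$ are computed the same in $V[G_\gamma]$ and $V[G_\kappa]$, which the paper verifies by induction using the closure of the tail forcing.)
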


To prove Theorem \ref{theorem_main_theorem} we will define an Easton-support forcing iteration of length $\kappa$ which kills the $\omega$-weak compactness of all cardinals less than $\kappa$, preserves $\Refl_{\wc}(\kappa)$ and thus preserves the $\omega$-weak compactness of $\kappa$. Before summarizing our proof of Theorem \ref{theorem_main_theorem}, let us first recall a few basic concepts. The $\omega$-weak compactness of a cardinal $\gamma\leq\kappa$ is witnessed by the fact that all sets in a particular $\omega$-sequence $\vec{S}=\<S^\gamma_n\st n<\omega\>$ are weakly compact, i.e. positive with respect to the weakly compact ideal at $\gamma$ (see Lemma \ref{lemma_characterization_of_omega_weak_compactness}). Sun proved \cite{MR1245524} that if $\gamma$ is a weakly compact cardinal then a set $E\subseteq\gamma$ is weakly compact if and only if $E\cap C\neq\emptyset$ for every $1$-club $C\subseteq\gamma$; we review the definition of $1$-club and relevant results below: see Definition \ref{definition_1_closed_1_club} and Lemma \ref{lemma_1_club_characterization}. Hellsten showed that the weak compactness of a set $S\subseteq\gamma$, whose complement $\gamma\setminus S$ also happens to be weakly compact, can be killed by a forcing which shoots a $1$-club through $\gamma\setminus S$ and preserves the weak compactness of $\gamma$; we provide a proof of this result, see Lemma \ref{theorem_1_club_shooting}, which differs slightly from Hellsten's argument in that it does not use the indescribability characterization of weak compactness. We could easily define a forcing iteration that kills all $\omega$-weakly compact cardinals $\gamma<\kappa$, simply by shooting a $1$-club through $\gamma\setminus S^\gamma_n$ for some fixed $n<\omega$ and for all weakly compact $\gamma<\kappa$. However, this would kill all $(n+1)$-weakly compact cardinals below $\kappa$ and thus would not preserve the $\omega$-weak compactness of $\kappa$. Our iteration will use a lottery sum at stage $\gamma$ to generically select what degrees of weak compactness to kill at that stage. For technical reasons explained below in Remark \ref{remark_intuitive_comment}, to kill the $\omega$-weak compactness of some $\gamma<\kappa$ our iteration will kill the weak compactness of all sets in some final segment $\vec{S}\restrict[m,\omega)$ of $\vec{S}$ by shooting $1$-clubs through $\gamma\setminus S^\gamma_n$ for \emph{all} $n\in[m,\omega)$; this is a key difference between our forcing and the forcing used by Mekler and Shelah (see \cite[Theorem 9]{MR1029909}). The preservation of $\Refl_{\wc}(\kappa)$ will be proved by lifting elementary embeddings witnessing the weak compactness of various sets and then applying the following lemma, which is due to Gitik and Shelah \cite[Lemma 1.13]{MR1780068}, and was employed by Mekler and Shelah \cite[Lemma 6]{MR1029909}.
\begin{lemma}[Gitik-Shelah, \cite{MR1780068}]\label{lemma_gitik_shelah}
Suppose $\lambda$ is a regular cardinal and $\Q$ is a notion of forcing which is $\lambda$-c.c. Suppose $I$ is a normal $\lambda$-complete ideal on $\lambda$, $S\in I^+$ and $\vec{q}=\<q_\alpha\st\alpha\in S\>$ is a sequence of conditions. Then there is a set in the dual filter $C\in I^*$ so that for all $\alpha\in C\cap S$, 
\[q_\alpha\forces \text{``$\{\beta\in S\st q_\beta\in\dot{G}\}$ is positive with respect to the ideal generated by $I$''.}\]
\end{lemma}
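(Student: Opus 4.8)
The plan is to argue by contradiction, exploiting the transparent description of the ideal $\bar I$ generated by $I$ in an extension $V[G]$ by $\Q$: since $\Q$ is $\lambda$-c.c.\ and $I$ is $\lambda$-complete, one has $\bar I=\{T\subseteq\lambda : \exists A\in I\ (T\subseteq A)\}$, so a set $T\in V[G]$ is $\bar I$-positive exactly when $T\not\subseteq A$ for every $A\in I$ (computed in $V$). Let $\dot X$ be the canonical $\Q$-name for $\{\beta\in S : q_\beta\in\dot G\}$ and suppose the conclusion fails. Unwinding the negation: for every $C\in I^*$ there is $\alpha\in C\cap S$ with $q_\alpha\not\forces$``$\dot X$ is $\bar I$-positive''. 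Hence the set $S'=\{\alpha\in S : q_\alpha\not\forces\text{``}\dot X\text{ is }\bar I\text{-positive''}\}$ meets every element of $I^*$, so $S'\in I^+$.

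For each $\alpha\in S'$, by the definition of forcing and the description of $\bar I$ above there are a condition $r_\alpha\leq q_\alpha$ and a set $A_\alpha\in I$ with $r_\alpha\forces\dot X\subseteq\check A_\alpha$. The crucial point is that $r_\alpha$ is then incompatible with $q_\beta$ for every $\beta\in S\setminus A_\alpha$: such a $\beta$ lies in $S$ but not in $A_\alpha$, so $r_\alpha\forces\beta\notin\dot X$, i.e.\ $r_\alpha\forces q_\beta\notin\dot G$, and this is equivalent to $r_\alpha\perp q_\beta$. Using this I build a strictly increasing sequence $\<\xi_\gamma : \gamma<\lambda\>$ of ordinals in $S'$ by recursion: having chosen $\<\xi_\delta : \delta<\gamma\>$, the set $B_\gamma=\bigcup_{\delta<\gamma}A_{\xi_\delta}$ lies in $I$ by $\lambda$-completeness, and since $I$ also contains every bounded subset of $\lambda$ while $\lambda$ is regular, the set $S'\setminus\bigl(B_\gamma\cup\sup_{\delta<\gamma}(\xi_\delta+1)\bigr)$ is still $I$-positive, hence nonempty; let $\xi_\gamma$ be any of its elements. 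Then $\xi_\gamma>\xi_\delta$ and $\xi_\gamma\notin A_{\xi_\delta}$ for all $\delta<\gamma$, so $\xi_\gamma\in S\setminus A_{\xi_\delta}$, whence $r_{\xi_\delta}\perp q_{\xi_\gamma}$, and since $r_{\xi_\gamma}\leq q_{\xi_\gamma}$ also $r_{\xi_\delta}\perp r_{\xi_\gamma}$. Therefore $\{r_{\xi_\gamma} : \gamma<\lambda\}$ is an antichain of $\Q$ of size $\lambda$, contradicting the $\lambda$-c.c.

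The delicate steps are the forcing-language bookkeeping — in particular the equivalence ``$r\forces q\notin\dot G$ iff $r\perp q$'' and the extraction of a single ground-model witness $A_\alpha$ from the assumption that $q_\alpha$ does not force $\dot X$ to be $\bar I$-positive — together with the recursive ``free set'' construction, which is the heart of the matter: it succeeds only because $\lambda$-completeness of $I$ prevents the accumulated exceptional set $B_\gamma$ from ever catching up with the $I$-positive set $S'$. Normality of $I$ is not actually used beyond ensuring that $I$ contains all bounded subsets of $\lambda$; it appears in the hypotheses because $I$ is normal in all the intended applications.
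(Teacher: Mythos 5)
The paper does not prove this lemma: it is quoted verbatim from Gitik--Shelah \cite[Lemma 1.13]{MR1780068} and used as a black box, so there is no in-paper argument to compare against. Your proof is correct. The contrapositive set-up is right ($S'\in I^+$ because a set meeting every member of $I^*$ is $I$-positive), the extraction of $r_\alpha\leq q_\alpha$ and $A_\alpha\in I$ with $r_\alpha\forces\dot X\subseteq\check A_\alpha$ is a legitimate two-step extension (first force $\dot X\in\bar I$, then decide a ground-model cover), the equivalence $r\forces q\notin\dot G\iff r\perp q$ is standard, and the recursion produces a genuine antichain of size $\lambda$, contradicting the $\lambda$-c.c. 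Your argument is essentially the standard one for this lemma, with one cosmetic difference: the usual proof uses normality directly, replacing your transfinite recursion by the single diagonal union $A=\bigtriangleup_{\alpha}A_\alpha$, so that any two $\alpha<\beta$ in the positive set $S'\setminus A$ already satisfy $\beta\notin A_\alpha$ and hence $r_\alpha\perp r_\beta$. Your recursion buys a slightly weaker hypothesis (only $\lambda$-completeness plus $I$ containing the bounded sets, which normality supplies), at the cost of an extra induction; your closing remark to this effect is accurate. Two harmless quibbles: the description of the generated ideal as $\{T:\exists A\in I\ (T\subseteq A)\}$ needs neither the $\lambda$-c.c.\ nor $\lambda$-completeness, only that $I$ is closed under finite unions; and distinctness of the $\xi_\gamma$ (rather than monotonicity) is all the antichain argument really requires.
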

When applying Lemma \ref{lemma_gitik_shelah} to our iteration, we will need to know that after forcing with our iteration, the ideal generated by the ground model weakly compact ideal is equal to the weakly compact ideal of the extension. In Section \ref{section_weakly_compact_ideal_after_forcing} below, we generalize the standard fact that after $\kappa$-c.c. forcing the nonstationary ideal on $\kappa$ equals the ideal generated by the ground model nonstationary ideal on $\kappa$ by proving the following theorem which states that if $\kappa$ is a weakly compact cardinal and $\P_\kappa$ is a `typical' Easton-support iteration of length $\kappa$ then after forcing with $\P_\kappa$, the weakly compact ideal of the extension equals the ideal generated by the ground model weakly compact ideal.

\begin{theorem}\label{theorem_weakly_compact_ideal_after_forcing}
Suppose $\kappa$ is weakly compact cardinal, assume that $\P=\<(\P_\alpha,\dot{\Q}_\alpha) \st\alpha<\kappa\>\subseteq V_\kappa$ is an Easton-support forcing iteration such that for each $\alpha<\kappa$, $\forces_{\P_\alpha}$ ``$\dot{\Q}_\alpha$ is $\alpha$-strategically closed''. Let $\dot{\Pi}^1_1(\kappa)$ be a $\P_\kappa$-name for the weakly compact ideal of the extension $V^{\P_\kappa}$ and let $\check{\Pi}^1_1(\kappa)$ be a $\P_\kappa$-check name for the weakly compact ideal of the ground model. Then $\forces_{\P_\kappa}$ $\dot{\Pi}^1_1(\kappa)=\overline{\check{\Pi}^1_1(\kappa)}$. %\marginpar{\tiny Notice that this result doesn't apply to the Easton-support iteration for killing all Mahlo cardinals ${<}\kappa$, since in that setting $\Q_\alpha$ is not ${<}\alpha$-strategically closed---it does contain a ${\leq}\gamma$-closed open dense subset for every $\gamma<\alpha$.}
\end{theorem}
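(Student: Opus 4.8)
The plan is to prove the two inclusions $\overline{\check{\Pi}^1_1(\kappa)} \subseteq \dot{\Pi}^1_1(\kappa)$ and $\dot{\Pi}^1_1(\kappa) \subseteq \overline{\check{\Pi}^1_1(\kappa)}$ separately in the forcing extension $V[G]$, where $G$ is $\P_\kappa$-generic. The first inclusion should be relatively soft. Since $\P_\kappa \subseteq V_\kappa$ and each iterand is forced to be $\alpha$-strategically closed, a standard argument (factoring $\P_\kappa$ below any stage and using chain condition / closure bookkeeping) shows $\P_\kappa$ is $\kappa$-c.c.\ and in fact preserves weak compactness of $\kappa$; moreover it adds no new subsets of any $\gamma < \kappa$ that weren't "approximated" in a controlled way. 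The key point for the first inclusion is that every ground-model weakly compact subset of $\kappa$ remains weakly compact in $V[G]$: given a ground-model $\Pi^1_1$-positive set $S$ and a lifting-style argument using an embedding $j\colon M \to N$ with critical point $\kappa$ witnessing weak compactness of $S$ in $V$, one lifts $j$ through $\P_\kappa$ (using strategic closure of the tail to build a master condition / generic for $j(\P_\kappa)$ below stage $\kappa$) to obtain $j\colon M[G] \to N[j(G)]$ with $\kappa \in j(S)$, so $S$ stays weakly compact. Since the weakly compact ideal in $V[G]$ is an ideal, it then contains every set in $\overline{\check{\Pi}^1_1(\kappa)}$.

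For the reverse inclusion, which is the substantive direction, I would use Sun's $1$-club characterization (Lemma~\ref{lemma_1_club_characterization}): a set $E \subseteq \kappa$ is weakly compact in $V[G]$ iff $E$ meets every $1$-club $C \subseteq \kappa$ in $V[G]$. So suppose $E \in V[G]$ is \emph{not} in $\overline{\check{\Pi}^1_1(\kappa)}$; I must show $E$ is not weakly compact in $V[G]$, i.e.\ produce a $1$-club in $V[G]$ disjoint from $E$. The assumption that $E \notin \overline{\check{\Pi}^1_1(\kappa)}$ means that for every ground-model weakly compact set $W$ (equivalently every set in $\check{\Pi}^1_1(\kappa)^+$ — careful with the direction), $E$ is not "captured"; concretely, working with a $\P_\kappa$-name $\dot E$ and using the $\kappa$-c.c.\ together with a reflection argument, one finds a ground-model $1$-club-like structure avoiding $E$. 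The natural route: by $\kappa$-c.c., for each $\alpha < \kappa$ the set of conditions deciding $\check\alpha \in \dot E$ has a maximal antichain of size $<\kappa$, so there is a ground-model function assigning to each $\alpha$ the relevant data; using weak compactness of $\kappa$ in $V$ (via an embedding $j$) and the hypothesis on $E$, one extracts in $V$ a $1$-club $C$ together with a name showing $C$ stays $1$-club in $V[G]$ and $C \cap E = \emptyset$ in $V[G]$. Here one uses that a ground-model $1$-club remains a $1$-club after the iteration: $1$-closure is about $\Pi^1_1$-indescribable (equivalently "$1$-stationary") initial segments, and by the first inclusion (ground-model weakly compact sets stay weakly compact) plus the strategic-closure/c.c.\ preservation of the relevant $\Pi^1_1$-structure, $1$-clubs are upward absolute.

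The \textbf{main obstacle} is verifying that $1$-clubs of the ground model remain $1$-clubs in the extension and, dually, that the extension cannot introduce an $E$ meeting every ground-model $1$-club yet failing to meet some new $1$-club — i.e.\ genuinely pinning down that no \emph{new} $1$-clubs appear that would make a set weakly compact which the ground-model ideal "wanted" to be small. This is exactly the $\Pi^1_1$-analogue of the classical fact that $\kappa$-c.c.\ forcing adds no new clubs modulo old ones, but the second-order quantifier in $\Pi^1_1$-indescribability means one cannot simply intersect with a club: one must argue that for any $V[G]$-$1$-club $C$, there is a ground-model $1$-club $C' \subseteq C$, which requires running the weakly-compact embedding argument \emph{inside} $V[G]$ and reflecting the name for $C$ back. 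I expect this to hinge on the observation (provable from the iteration hypotheses) that $\P_\kappa$ adds no new weakly compact \emph{subsets of} any $\gamma < \kappa$ in a way that disturbs $\Pi^1_1$-indescribability of $\gamma$ — more precisely, that the property "$\gamma$ is $\Pi^1_1$-indescribable and $A \cap \gamma$ is $\Pi^1_1$-indescribable in $\gamma$" is sufficiently absolute, which follows from $\dot\Q_\alpha$ being $\alpha$-strategically closed (hence adding no new subsets of $\alpha$ over $V^{\P_\alpha}$) combined with $\P_\alpha \in V_\kappa$ having small size. Once that absoluteness is in hand, both inclusions follow by pairing Sun's characterization with the standard lifting machinery, and the chain-condition bookkeeping is routine.
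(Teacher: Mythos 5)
There are genuine gaps, the most serious being in the substantive inclusion; there are also two direction errors in the set-up. For $\overline{\check{\Pi}^1_1(\kappa)}\subseteq\dot{\Pi}^1_1(\kappa)$ you need that ground-model \emph{null} sets stay null (equivalently, that a ground-model $1$-club stays a $1$-club), not that ground-model positive sets stay positive; the two statements are not contrapositives of one another, and your inference ``$S$ weakly compact in $V$ stays weakly compact, hence the extension's ideal contains every set covered by a ground-model null set'' is a non sequitur. The correct soft argument (the paper's Lemma~\ref{lemma_weakly_compact_ideal_after_kappa_cc}) takes a ground-model $1$-club $C$ disjoint from the given null set and checks that $C$ remains $1$-club: stationarity at $\kappa$ survives $\kappa$-c.c.\ forcing, and $1$-closure survives because stationarity of $C\cap\mu$ in $V[G]$ implies stationarity in $V$. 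Symmetrically, in the reverse inclusion you invert the contrapositive: from $E\notin\overline{\check{\Pi}^1_1(\kappa)}$ you must show that $E$ \emph{is} weakly compact in $V[G]$, not that it is not.

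For the hard inclusion $\dot{\Pi}^1_1(\kappa)\subseteq\overline{\check{\Pi}^1_1(\kappa)}$, your reformulation --- every $1$-club of $V[G]$ contains a ground-model $1$-club --- is indeed equivalent to what must be proved, but you never prove it; the absoluteness you invoke (of ``$\gamma$ is $\Pi^1_1$-indescribable'' for $\gamma<\kappa$) only bears on $1$-closure below $\kappa$ and does not touch the second-order content at $\kappa$ itself, so the sketch is essentially circular. The missing mechanism is a term-set-plus-lifting argument at $\kappa$: given $p\forces\dot{X}\in\dot{\Pi}^1_1(\kappa)$, use the fullness principle to fix a name $\dot{A}$ such that $p$ forces that no embedding $j:M\to N$ of $\kappa$-models with $\dot{A},\dot{X}\in M$ has $\kappa\in j(\dot{X})$; let $B=\{\alpha<\kappa\st\exists q\leq p\ (q\forces\alpha\in\dot{X})\}$, a ground-model cover of $\dot{X}$, and show $B\in\Pi^1_1(\kappa)^V$. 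If not, choose $j:M\to N$ with $\kappa\in j(B)$; elementarity yields $r\leq j(p)=p$ in $j(\P_\kappa)$ forcing $\kappa\in j(\dot{X})$, and one lifts $j$ to $M[G]\to N[\hat{G}]$ with $r\in\hat{G}$ by building a generic for the tail $j(\P_\kappa)/G$ over $N[G]$ inside $V[G]$ (using $\kappa$-c.c.\ for closure of $N[G]$ and the strategic closure of the tail), contradicting the choice of $\dot{A}$. Without this argument, or an equivalent, the theorem is not established.
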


\section{Preliminaries}

\subsection{Weak compactness}
For our purposes it will be advantageous to work with the characterization of weak compactness stated in terms of elementary embeddings. Let us review some standard facts about such characterizations.

\begin{definition}
We say that $M$ is a \emph{$\kappa$-model} if and only if
\begin{enumerate}
\item $\kappa\in M$,
\item $|M|=\kappa$,
\item $M$ is transitive,
\item $M^{<\kappa}\cap V\subseteq M$ and 
\item $M\models\ZFC^-$.\footnote{Here $\ZFC^-$ denotes the axioms of $\ZFC$ without the powerset axiom and with the collection axiom instead of the replacement axiom \cite{MR3549557}.
}
\end{enumerate}
\end{definition}

It is easy to see that if $\kappa$ is a cardinal then the collection $H_{\kappa^+}$ of all sets whose transitive closure has size at most $\kappa$ is a model of $\ZFC^-$. Furthermore, if $X\in H_{\kappa^+}$ then, using an iterative Skolem closure argument, one can build an elementary substructure $M\prec H_{\kappa^+}$ such that $\kappa,X\in M$, $|M|=\kappa$, $M^{<\kappa}\cap V\subseteq M$ and $\kappa\subseteq M$. Since $\kappa\subseteq M$ it follows by elementarity that $M$ is transitive, and thus $M$ is a $\kappa$-model. This establishes the following.

\begin{lemma}\label{lemma_build_kappa_model}
Suppose $\kappa$ is a cardinal and $X\in H_{\kappa^+}$. There is a $\kappa$-model $M$ such that $X\in M\prec H_{\kappa^+}$.
\end{lemma}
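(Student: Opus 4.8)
The plan is to build $M$ as the union of a continuous increasing elementary chain $\langle M_\alpha \st \alpha<\kappa\rangle$ of elementary substructures of $H_{\kappa^+}$, alternately closing off under definable Skolem functions and under ground model sequences of length less than $\kappa$. Since $H_{\kappa^+}\models\ZFC^-$ and in particular $H_{\kappa^+}\models\AC$, I would first fix a well-ordering of $H_{\kappa^+}$ and hence a canonical system of Skolem functions; for $A\subseteq H_{\kappa^+}$ let $\mathrm{Hull}^{H_{\kappa^+}}(A)$ denote the closure of $A$ under these functions, which is an elementary substructure of $H_{\kappa^+}$ of cardinality $\max(|A|,\aleph_0)$. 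The base of the chain is $M_0=\mathrm{Hull}^{H_{\kappa^+}}(\{X,\kappa\}\cup\kappa)$, so that $X,\kappa\in M_0$, $\kappa\subseteq M_0$, and $|M_0|=\kappa$.

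At a successor stage, given $M_\alpha$ of cardinality $\kappa$, I would set $A_\alpha=M_\alpha\cup(M_\alpha^{<\kappa}\cap V)$ --- adjoining to $M_\alpha$ every $V$-sequence of length less than $\kappa$ of elements of $M_\alpha$ --- and then put $M_{\alpha+1}=\mathrm{Hull}^{H_{\kappa^+}}(A_\alpha)$. Any such $s\colon\delta\to M_\alpha$ with $\delta<\kappa$ has $|\tc(s)|\le\kappa$ and so genuinely lies in $H_{\kappa^+}$, making this step legitimate. The one essential piece of cardinality bookkeeping is that $|M_\alpha^{<\kappa}|\le\kappa^{<\kappa}$; since $\kappa$ is weakly compact, hence inaccessible, we have $\kappa^{<\kappa}=\kappa$, so $|A_\alpha|=\kappa$ and $|M_{\alpha+1}|=\kappa$. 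At limit stages $\lambda<\kappa$ I take unions $M_\lambda=\bigcup_{\alpha<\lambda}M_\alpha$, and finally set $M=\bigcup_{\alpha<\kappa}M_\alpha$. It is worth noting that the existence of a $\kappa$-model of size $\kappa$ already forces $\kappa^{<\kappa}=\kappa$, and in particular that $\kappa$ is regular, since $\kappa\subseteq M$ together with $M^{<\kappa}\cap V\subseteq M$ gives $\kappa^{<\kappa}\le|M|=\kappa$; this is available here because $\kappa$ is weakly compact.

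It then remains to verify the five clauses defining a $\kappa$-model together with $X\in M\prec H_{\kappa^+}$. Elementarity $M\prec H_{\kappa^+}$ follows from the elementary chain theorem, and $M\models\ZFC^-$ is then inherited from $H_{\kappa^+}$. We have $|M|=\kappa$ as a union of $\kappa$ many sets of size $\kappa$ (using regularity of $\kappa$), and plainly $X,\kappa\in M$ with $\kappa\subseteq M$. For the closure clause, given $s\in M^{<\kappa}\cap V$, its range is a subset of $M$ of size less than $\kappa$, so by continuity of the chain and regularity of $\kappa$ there is $\alpha<\kappa$ with $\ran(s)\subseteq M_\alpha$; then $s\in M_\alpha^{<\kappa}\cap V\subseteq A_\alpha\subseteq M_{\alpha+1}\subseteq M$.

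The step I expect to require the most care is transitivity, which is not delivered by the chain construction itself but instead by the fact that $\kappa\subseteq M$. Given nonempty $x\in M$ and $y\in x$, I would use that every element of $H_{\kappa^+}$ has cardinality at most $\kappa$, so that $H_{\kappa^+}\models$ ``there is a surjection $f\colon\kappa\to x$''; by elementarity such an $f$ can be chosen in $M$, and since $M\prec H_{\kappa^+}$ it is a genuine surjection of $\kappa$ onto $x$. Writing $y=f(\xi)$ with $\xi<\kappa$, we have $\xi\in\kappa\subseteq M$ and $f\in M$, so the value $f(\xi)$ --- being definable from the parameters $f,\xi\in M$ --- lies in $M$, whence $y\in M$. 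This completes the verification that $M$ is a $\kappa$-model with $X\in M\prec H_{\kappa^+}$.
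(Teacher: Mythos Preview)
Your argument is correct and follows the same approach the paper sketches in the paragraph preceding the lemma: an iterated Skolem-hull construction to obtain $M\prec H_{\kappa^+}$ with $\kappa\subseteq M$ and $M^{<\kappa}\cap V\subseteq M$, followed by the observation that $\kappa\subseteq M$ forces transitivity. You supply considerably more detail than the paper (which leaves the construction as a one-line remark), but the strategy is identical. One small point worth flagging: the lemma as stated only assumes ``$\kappa$ is a cardinal,'' yet you invoke weak compactness to secure $\kappa^{<\kappa}=\kappa$; you are right that some such hypothesis is needed (indeed, as you note, the mere existence of a $\kappa$-model already implies $\kappa^{<\kappa}=\kappa$), and the paper's own sketch silently relies on the same assumption without stating it.
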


\begin{remark}\label{remark_build_kappa_model_for_a_sequence}
Of course, if $\gamma<\kappa^+$ we can code a sequence $\vec{X}=\<X_\alpha\st\alpha<\gamma\>$ of elements of $H_{\kappa^+}$ into a single subset of $\kappa$, and thus build a $\kappa$-model $M$ with $X_\alpha\in M$ for all $\alpha<\gamma$.
\end{remark}

We take the following to be the definition of weakly compact cardinal and weakly compact set, as these are the characterizations which seem to have the most utility when working with Easton-support forcing iterations.

\begin{definition}\label{definition_weak_compactness}
Suppose $\kappa^{<\kappa}=\kappa$.
\begin{enumerate}
\item We say that $\kappa$ is a \emph{weakly compact cardinal} if for every $A\subseteq\kappa$ there is a $\kappa$-model $M$ with $A\in M$ and there is an elementary embedding $j:M\to N$ with critical point $\kappa$ where $N$ is a $\kappa$-model.
\item We say that $S\subseteq\kappa$ is a \emph{weakly compact subset of $\kappa$} if and only if for every $A\subseteq\kappa$ there is a $\kappa$-model $M$ with $A,S\in M$ and there is an elementary embedding $j:M\to N$ with critical point $\kappa$ such that $\kappa\in j(S)$ where $N$ is a $\kappa$-model.
\end{enumerate}
\end{definition}
Notice that $\kappa$ is a weakly compact cardinal if and only if it is a weakly compact subset of itself. The following lemma, essentially due to Baumgartner \cite{MR0540770}, is well known. For details see one may consult \cite[Theorem 4.13]{MR2026390}, \cite[Theorem 6.4]{Kanamori:Book} and \cite[Theorem 16.1]{Cummings:Handbook}.

\begin{lemma}\label{lemma_characterizations}
Suppose $\kappa^{<\kappa}=\kappa$. The following are equivalent. 
\begin{enumerate}
\item $S\subseteq\kappa$ is a weakly compact subset of $\kappa$.
%\item For every $A\subseteq\kappa$ there is a transitive set $M$ of size $\kappa$ with $\kappa,A,S\in M$ and there is an elementary embedding $j:M\to N$ with critical point $\kappa$ such that $\kappa\in j(S)$ and $N$ is transitive.
%\item For every transitive set $M$ of size $\kappa$ with $\kappa,S\in M$ there is an elementary embedding $j:M\to N$ with critical point $\kappa$ such that $\kappa\in j(S)$.
%\item For every $\kappa$-model $M$ with $S\in M$ there is an elementary embedding $j:M\to N$ with critical point $\kappa$ such that $\kappa\in j(S)$ and $N$ is a $\kappa$-model.
\item $S$ is $\Pi^1_1$-indescribable; in other words, for every $A\subseteq\kappa$ and for every $\Pi^1_1$-sentence $\varphi$ if $(V_\kappa,\in,A)\models \varphi$ then there is an $\alpha\in S$ such that $(V_\alpha,\in,A\cap\alpha)\models\varphi$.
\end{enumerate}
\end{lemma}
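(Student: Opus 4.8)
The plan is to prove both implications of the equivalence, handling the forward direction (1)$\Rightarrow$(2) by a direct reflection computation and the reverse direction (2)$\Rightarrow$(1) by constructing a normal $M$-ultrafilter containing $S$; I expect the latter to be the main obstacle. Throughout I note that the existence of any embedding $j:M\to N$ with $\crit(j)=\kappa$ already forces $\kappa$ to be inaccessible, hence a strong limit, so that for $\alpha<\kappa$ every subset of $V_\alpha$ lies in $V_\kappa$.

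For (1)$\Rightarrow$(2), suppose $S$ is a weakly compact subset of $\kappa$, fix $A\subseteq\kappa$ and a $\Pi^1_1$-sentence $\varphi=\forall X\,\psi(X)$ (with $\psi$ first-order) such that $(V_\kappa,\in,A)\models\varphi$. First I would code $V_\kappa$, $A$ and $S$ into a single subset of $\kappa$ and apply Definition \ref{definition_weak_compactness}(2) to obtain a $\kappa$-model $M$ with $V_\kappa,A,S\in M$ and an elementary $j:M\to N$ with $\crit(j)=\kappa$, $\kappa\in j(S)$, and $N$ a $\kappa$-model; by transitivity $V_\kappa\subseteq M$ and $V_\kappa=V_\kappa^M=V_\kappa^N$, while $\crit(j)=\kappa$ gives $j(A)\cap\kappa=A$. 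Since the genuine $\Pi^1_1$-truth $(V_\kappa,\in,A)\models\varphi$ quantifies universally over \emph{all} $X\subseteq V_\kappa$, it holds a fortiori when $X$ ranges only over $M$, so $M\models$``$(V_\kappa,\in,A)\models\varphi$''. Pushing this forward by $j$ yields $N\models$``$(V_{j(\kappa)}^N,\in,j(A))\models\varphi$''. The crux is the further observation that $N\models$``$(V_\kappa,\in,A)\models\varphi$'' as well: each $X\in N$ with $X\subseteq V_\kappa$ is a genuine subset of $V_\kappa$, so the first-order fact $(V_\kappa,\in,A,X)\models\psi$ holds in $V$ and is computed correctly inside $N\models\ZFC^-$. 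Combined with $\kappa\in j(S)$ and $\kappa<j(\kappa)$, this shows $N$ satisfies ``there is $\alpha<j(\kappa)$ with $\alpha\in j(S)$ and $(V_\alpha,\in,j(A)\cap\alpha)\models\varphi$'', witnessed by $\alpha=\kappa$. Pulling this back along $j$ by elementarity produces $\alpha<\kappa$ with $\alpha\in S$ and $M\models$``$(V_\alpha,\in,A\cap\alpha)\models\varphi$''; since $\kappa$ is a strong limit, $M$ contains every subset of $V_\alpha$, so $M$ computes this $\Pi^1_1$-satisfaction correctly and $(V_\alpha,\in,A\cap\alpha)\models\varphi$ genuinely holds, as required.

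For (2)$\Rightarrow$(1), suppose $S$ is $\Pi^1_1$-indescribable, fix $A\subseteq\kappa$, and build (via Lemma \ref{lemma_build_kappa_model} and Remark \ref{remark_build_kappa_model_for_a_sequence}) a $\kappa$-model $M\prec H_{\kappa^+}$ with $A,S,V_\kappa\in M$. The goal is a normal, $<\kappa$-complete $M$-ultrafilter $U$ on $\kappa$ with $S\in U$ whose ultrapower $N=\Ult(M,U)$ is well-founded; the induced $j:M\to N$ then has $\crit(j)=\kappa$ and $\kappa\in j(S)$, while the closure and cardinality of $M$ make $N$ a $\kappa$-model. Since $|M|=\kappa$ there are only $\kappa$ sets to decide, so I would enumerate $\mathcal P(\kappa)\cap M=\langle X_\xi\st\xi<\kappa\rangle$ and construct $U$ by a transfinite recursion of length $\kappa$, maintaining a decreasing sequence of $\Pi^1_1$-indescribable sets starting from $S$ and deciding each $X_\xi$ by retaining whichever of $Y_\xi\cap X_\xi$, $Y_\xi\setminus X_\xi$ remains $\Pi^1_1$-indescribable. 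Normality of $U$ then yields $\crit(j)=\kappa$, and $<\kappa$-completeness yields well-foundedness of the ultrapower.

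The main obstacle is exactly this recursion in (2)$\Rightarrow$(1): ensuring it never breaks down requires knowing that the weakly compact ideal (the ideal of non-$\Pi^1_1$-indescribable sets) is a proper, normal, $\kappa$-complete ideal, so that positivity is preserved at successor stages (an indescribable set cannot be split into two describable pieces) and at limit stages (via diagonal intersections of the $M$-sequences produced so far), keeping $S$ positive throughout. These facts about the weakly compact ideal, together with the verification that $\Ult(M,U)$ is well-founded and its transitive collapse is a $\kappa$-model, are the substantive points; as this equivalence is classical and essentially due to Baumgartner, I would cite \cite[Theorem 4.13]{MR2026390}, \cite[Theorem 6.4]{Kanamori:Book} and \cite[Theorem 16.1]{Cummings:Handbook} for the remaining routine details.
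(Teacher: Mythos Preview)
The paper itself does not prove this lemma; it simply records that the equivalence is well known (essentially Baumgartner) and cites \cite{MR2026390}, \cite{Kanamori:Book} and \cite{Cummings:Handbook}. In that sense your write-up and the paper ultimately agree, since you defer to the same references. Your sketch of (1)$\Rightarrow$(2) is correct and is the standard argument: the key points are that $\Pi^1_1$-truth over $(V_\kappa,\in,A)$ is downward absolute to $N$, and that for $\alpha<\kappa$ the second-order quantifier in ``$(V_\alpha,\in,A\cap\alpha)\models\varphi$'' ranges only over $P(V_\alpha)\subseteq V_\kappa\subseteq M$, so the reflection via $j$ is a first-order statement and elementarity applies.

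Your mechanism for (2)$\Rightarrow$(1), however, has a genuine gap, and you have misidentified the obstacle. Normality and $\kappa$-completeness of the weakly compact ideal give closure of the dual \emph{filter} under diagonal intersections and ${<}\kappa$-intersections; they say nothing about intersections of \emph{positive} sets. So at a limit stage $\xi$ of your recursion the set $\bigcap_{\eta<\xi}Y_\eta$ can fall out of $\Pi^1_1(\kappa)^+$ and the construction stalls. Indeed, if normality alone sufficed here, the same recursion would produce, for any normal ideal $I$ on any inaccessible $\kappa$ and any $S\in I^+$, a normal $M$-ultrafilter containing $S$ --- hence a weak-compactness embedding --- which is absurd when $\kappa$ is, say, the least inaccessible. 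The argument in the references you cite is not a transfinite recursion but a reflection: one codes the $\kappa$-model $M$ as a predicate on $V_\kappa$, observes that ``there is a $\kappa$-complete normal $M$-ultrafilter $U$ with $S\in U$'' is $\Sigma^1_1$ over $(V_\kappa,\in,\text{code}(M))$, reflects its $\Pi^1_1$ negation to some $\alpha\in S$ at which the code determines a transitive $\bar M$ with $\bar M\cap\kappa=\alpha$, and then notes that the filter $\{X\in P(\alpha)^{\bar M}:\alpha\in\pi(X)\}$ derived from the uncollapsing map $\pi:\bar M\to M$ is exactly such an ultrafilter (with $S\cap\alpha\in$ it, since $\alpha\in S$), contradicting the reflected statement. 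You should either replace the recursion by this argument or, as the paper does, simply cite the references without proposing a specific mechanism.
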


\subsection{The weakly compact ideal}

Levy \cite{MR0281606} showed that if $\kappa$ is a weakly compact cardinal then the set
\[\Pi^1_1(\kappa)=\{X\subseteq\kappa\st \text{$X$ is not a weakly compact subset of $\kappa$}\}\]
is a normal proper ideal on $\kappa$; we call $\Pi^1_1(\kappa)$ \emph{the weakly compact ideal on $\kappa$}.
The corresponding collection of positive sets and the dual filter will be written as
\begin{align*}
\Pi^1_1(\kappa)^+&=\{S\subseteq\kappa\st \text{$S$ is a weakly compact subset of $\kappa$}\} \text{ and }\\
\Pi^1_1(\kappa)^*&=\{X\subseteq\kappa\st\text{$\kappa\setminus X$ is not a weakly compact subset of $\kappa$}\},
\end{align*}
respectively.

As shown by Sun \cite{MR1245524}, the indescribability characterization given in Lemma \ref{lemma_characterizations} (2) allows us to give an additional characterization of weakly compact subsets of a weakly compact cardinal which resembles the definition of stationarity. 

\begin{definition}[Sun, \cite{MR1245524}]\label{definition_1_closed_1_club}
Suppose $\kappa$ is a regular cardinal and $C\subseteq\kappa$. 
\begin{enumerate}
\item $C$ is \emph{$1$-closed} if and only if for every inaccessible $\gamma<\kappa$ if $C\cap\gamma$ is stationary in $\gamma$ then $\gamma\in C$.
\item We say that $C\subseteq\kappa$ is \emph{$1$-club} if and only if
\begin{enumerate}
\item $C$ is a stationary subset of $\kappa$ and
\item $C$ is $1$-closed.
\end{enumerate}
\end{enumerate}
\end{definition}

\begin{lemma}[Sun, \cite{MR1245524}]\label{lemma_1_club_characterization}
Suppose $\kappa$ is a weakly compact cardinal. Then $S\subseteq\kappa$ is a weakly compact subset of $\kappa$ if and only if $S\cap C\neq\emptyset$ for every $1$-club $C\subseteq\kappa$.
\end{lemma}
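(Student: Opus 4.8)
The plan is to prove both implications through the $\Pi^1_1$-indescribability characterization of Lemma \ref{lemma_characterizations}(2) rather than through elementary embeddings, since this avoids all absoluteness bookkeeping between $\kappa$-models and $V$. Two syntactic facts will be used repeatedly. First, there is a fixed $\Pi^1_1$ sentence $\theta$ (in the language $\{\in\}$) such that for every ordinal $\gamma$ one has $(V_\gamma,\in)\models\theta$ if and only if $\gamma$ is inaccessible; indeed $\gamma$ is inaccessible iff $V_\gamma\models\ZFC_2$, and the only non–first-order axiom, second-order replacement, is $\Pi^1_1$. Second, for $C\subseteq\kappa$ the assertion ``$C$ is stationary'' is $\Pi^1_1$ over $(V_\kappa,\in,C)$, namely $\forall D\,[(D\text{ is club})\to D\cap C\neq\emptyset]$.

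For the forward direction I would assume $S$ is a weakly compact subset of $\kappa$ and let $C$ be a $1$-club, and then form the $\Pi^1_1$ sentence $\varphi\equiv\theta\wedge\text{``$C$ is stationary''}$ over $(V_\kappa,\in,C)$. Since $\kappa$ is inaccessible and $C$ is stationary, $(V_\kappa,\in,C)\models\varphi$. Applying the $\Pi^1_1$-indescribability of $S$ with $A=C$ yields some $\alpha\in S$ with $(V_\alpha,\in,C\cap\alpha)\models\varphi$, so $\alpha$ is inaccessible and $C\cap\alpha$ is stationary in $\alpha$; the $1$-closedness of $C$ then gives $\alpha\in C$, whence $\alpha\in S\cap C$ and $S\cap C\neq\emptyset$.

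For the converse I would argue by contraposition, constructing a $1$-club disjoint from $S$ from a failure of indescribability. Fix $A\subseteq\kappa$ and (without loss of generality) a $\Pi^1_1$ sentence $\psi\equiv\forall X\,\phi(X)$ with $\phi$ first-order, so that $(V_\kappa,\in,A)\models\psi$ while $(V_\alpha,\in,A\cap\alpha)\not\models\psi$ for every $\alpha\in S$. Set $C=\{\gamma<\kappa\st\gamma\text{ inaccessible and }(V_\gamma,\in,A\cap\gamma)\models\psi\}$; then $C\cap S=\emptyset$ is immediate from the choice of $A$ and $\psi$. To verify $1$-closedness, let $\gamma<\kappa$ be inaccessible with $C\cap\gamma$ stationary in $\gamma$, and suppose toward a contradiction that $(V_\gamma,\in,A\cap\gamma)\not\models\psi$; choosing a witness $X\subseteq V_\gamma$ with $(V_\gamma,\in,A\cap\gamma,X)\models\neg\phi(X)$ and using the inaccessibility of $\gamma$, I would obtain a club $D\subseteq\gamma$ of $\delta$ satisfying $(V_\delta,\in,A\cap\delta,X\cap V_\delta)\prec(V_\gamma,\in,A\cap\gamma,X)$. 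By elementarity each such $\delta$ has $(V_\delta,\in,A\cap\delta)\not\models\psi$ and hence lies outside $C$, so $D$ is disjoint from $C\cap\gamma$, contradicting stationarity; thus $\gamma\in C$.

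The main obstacle is the stationarity of $C$, since the reflection points of $\psi$ are not obviously stationary; here I would invoke the $\Pi^1_1$-indescribability of $\kappa$ itself (that $\kappa$ is a weakly compact subset of itself). Given any club $E\subseteq\kappa$, I would form the $\Pi^1_1$ sentence $\Psi\equiv\psi\wedge\theta\wedge\text{``$E$ is club''}$ over $(V_\kappa,\in,A,E)$, coding $A$ and $E$ as a single subset of $\kappa$. Since $(V_\kappa,\in,A,E)\models\Psi$, indescribability of $\kappa$ produces $\gamma<\kappa$ with $(V_\gamma,\in,A\cap\gamma,E\cap\gamma)\models\Psi$; then $\gamma$ is inaccessible, $(V_\gamma,\in,A\cap\gamma)\models\psi$ forces $\gamma\in C$, and $E\cap\gamma$ being club in $\gamma$ makes $\gamma$ a limit point of $E$, so $\gamma\in E$ by closure of $E$. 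Hence $C\cap E\neq\emptyset$, proving $C$ stationary; combined with $1$-closedness, $C$ is a $1$-club disjoint from $S$, which completes the contrapositive.
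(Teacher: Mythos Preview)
Your proof is correct and follows exactly the route the paper sketches after the lemma: the forward direction is the ``standard reflection argument'' (reflect the $\Pi^1_1$ statement ``inaccessible $\wedge$ $C$ stationary'' into $S$ and invoke $1$-closedness), and the reverse direction is precisely the paper's claim that for a $\Pi^1_1$ sentence $\varphi$ the set $\{\alpha<\kappa : (V_\alpha,\in,A\cap V_\alpha)\models\varphi\}$ is $1$-club, which you verify in full via the L\"owenheim--Skolem club for $1$-closedness and the indescribability of $\kappa$ itself for stationarity. Your only cosmetic deviation---restricting $C$ to inaccessible $\gamma$---is harmless since $\theta$ can be absorbed into $\psi$.
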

Similarly, for $n<\omega$, one may define the $n$-club subsets of $\kappa$ and prove that a set $S\subseteq\kappa$ is $\Pi^1_n$-indescribable if and only if $S\cap C\neq\emptyset$ for every $n$-club $C\subseteq\kappa$.

The forward direction of Lemma \ref{lemma_1_club_characterization} follows by a standard reflection argument. To prove the reverse direction one uses the characterization of weakly compact sets given in Lemma \ref{lemma_characterizations} (2) above and the fact that if $\varphi$ is a $\Pi^1_1$-sentence and $\kappa$ is a weakly compact cardinal then $\{\alpha<\kappa\st (V_\alpha,\in,A\cap V_\alpha)\models\varphi\}$ is $1$-club.

\subsection{Preserving weak compactness through forcing}\label{section_preserving_weak_compactness}

In the proof of our main theorem we will use the following standard lemmas to argue that various instances of weak compactness are preserved in forcing extensions. For further discussion of these methods see \cite{Cummings:Handbook}.

\begin{lemma}\label{lemma_lifting_criterion}
Suppose $j:M\to N$ is an elementary embedding with critical point $\kappa$ where $M$ and $N$ are $\kappa$-models and $\P\in M$ is some forcing notion. Suppose $G\subseteq\P$ is a filter generic over $M$ and $H\subseteq j(\P)$ is a filter generic over $N$. Then $j$ extends to $j:M[G]\to N[H]$ if and only if $j[G]\subseteq H$.
\end{lemma}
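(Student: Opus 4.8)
The plan is to prove the biconditional by establishing the substantive backward implication through a direct construction on names, and then reading off the forward implication from the canonical name for the generic filter. For the backward direction, I assume $j[G]\subseteq H$ and define a candidate lift $\bar\jmath\colon M[G]\to N[H]$ by setting $\bar\jmath(\dot a^G)=j(\dot a)^H$ for every $\P$-name $\dot a\in M$. This assignment is type-correct: since $\dot a$ is a $\P$-name in $M$ and $j$ is elementary, $j(\dot a)$ is a $j(\P)$-name in $N$, so $j(\dot a)^H$ is a well-defined element of $N[H]$; and since every element of $M[G]$ has the form $\dot a^G$, the map is defined on all of $M[G]$. The crux is to verify that for every formula $\varphi$ and all $\P$-names $\dot a_1,\dots,\dot a_n\in M$,
\[
M[G]\models\varphi(\dot a_1^G,\dots,\dot a_n^G)\quad\Longleftrightarrow\quad N[H]\models\varphi(j(\dot a_1)^H,\dots,j(\dot a_n)^H).
\]
Taking $\varphi$ to be an equality then shows $\bar\jmath$ is well defined, and the general case shows it is elementary.

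To prove the displayed equivalence I would establish one implication and apply it to $\neg\varphi$ to recover the other. Assume the left-hand side holds. By the forcing theorem inside $M$ — available because $M\models\ZFC^-$ and $\P$ is a set forcing, so the forcing relation is definable over $M$ — there is a condition $p\in G$ with $p\forces^M_{\P}\varphi(\dot a_1,\dots,\dot a_n)$. Since the forcing relation is definable and $j$ is elementary, it follows that $j(p)\forces^N_{j(\P)}\varphi(j(\dot a_1),\dots,j(\dot a_n))$. Here the hypothesis $j[G]\subseteq H$ is essential: because $p\in G$ we have $j(p)\in j[G]\subseteq H$, so a condition in $H$ forces $\varphi(j(\dot a_1),\dots,j(\dot a_n))$, and the forcing theorem inside $N$ gives $N[H]\models\varphi(j(\dot a_1)^H,\dots,j(\dot a_n)^H)$. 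Applying this implication to $\neg\varphi$ and contraposing yields the converse, completing the equivalence. Finally, $\bar\jmath$ extends $j$: for $x\in M$ one uses the check-name $\check x$, together with the fact that $j(\check x)$ is the check-name of $j(x)$ whose evaluation is $j(x)$, to compute $\bar\jmath(x)=\bar\jmath(\check x^G)=j(\check x)^H=j(x)$. This is the step I expect to be the main obstacle, since it is where the definability and elementary transfer of the forcing relation must be combined with the hypothesis $j[G]\subseteq H$; the only real care needed is that the entire argument is carried out over the $\ZFC^-$ models $M$ and $N$ rather than over full models of $\ZFC$, which is legitimate for set forcing.

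For the forward direction, suppose an elementary embedding $\bar\jmath\colon M[G]\to N[H]$ extending $j$ is given. For each $p\in G$ the statement $p\in G$ holds in $M[G]$, so by elementarity $j(p)=\bar\jmath(p)\in\bar\jmath(G)$, giving $j[G]\subseteq\bar\jmath(G)$; it thus suffices to show $\bar\jmath(G)=H$. Using the canonical name $\dot G=\{(\check p,p)\st p\in\P\}\in M$ for the generic filter, we have $\dot G^G=G$, while by elementarity $j(\dot G)=\{(\check p,p)\st p\in j(\P)\}$ is the canonical $j(\P)$-name for the generic filter over $N$, so that $j(\dot G)^H=H$. Since an elementary extension of $j$ with codomain $N[H]$ acts on names by $\dot a^G\mapsto j(\dot a)^H$, the standard computation gives $\bar\jmath(G)=\bar\jmath(\dot G^G)=j(\dot G)^H=H$, and therefore $j[G]\subseteq H$, as required.
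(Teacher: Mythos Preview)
The paper does not actually prove this lemma; it is stated as one of several ``standard lemmas'' and the reader is referred to \cite{Cummings:Handbook}. So there is no paper proof to compare against. Your backward direction is exactly the standard argument and is correct: defining $\bar\jmath(\dot a^G)=j(\dot a)^H$, transferring the forcing relation along $j$, and using $j[G]\subseteq H$ to put the forcing condition into $H$ gives well-definedness and elementarity, and the check-name computation shows $\bar\jmath$ extends $j$. The remark that this goes through over $\ZFC^-$ is appropriate.

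Your forward direction, however, has a real gap. The sentence ``an elementary extension of $j$ with codomain $N[H]$ acts on names by $\dot a^G\mapsto j(\dot a)^H$'' is precisely what is at issue; it does not follow from $\bar\jmath\supseteq j$ alone. Elementarity only gives $\bar\jmath(\dot a^G)=j(\dot a)^{\bar\jmath(G)}$, so your computation $\bar\jmath(\dot G^G)=j(\dot G)^H$ is circular---it presupposes $\bar\jmath(G)=H$. In fact the forward direction is \emph{false} under the bare reading ``some elementary $\bar\jmath:M[G]\to N[H]$ extending $j$ exists'': take $\P=\Add(\omega,1)\in H_\kappa^M$, so that $j(\P)=\P$ and $j[G]=G$; if $G$ is $V$-generic and $H$ is the generic obtained from $G$ by the bit-flipping automorphism of $\P$, then $N[G]=N[H]$, and the canonical lift $M[G]\to N[G]=N[H]$ extends $j$ even though $G\cap H=\{\emptyset\}$, so $j[G]=G\not\subseteq H$. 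The intended reading of the lemma---and the one in the reference the paper cites---is that the lifted embedding is required to send $G$ to $H$; under that convention the forward direction is the one-liner $j(p)=\bar\jmath(p)\in\bar\jmath(G)=H$. Since the paper only ever invokes the lemma in the backward direction, this does not affect anything downstream, but you should either add the hypothesis $\bar\jmath(G)=H$ to the forward direction or note explicitly that this is the convention.
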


\begin{lemma}\label{lemma_generic_closure}
Suppose that $M$ is a $\kappa$-model in $V$, so in particular $M^{<\kappa}\cap V\subseteq M$, and suppose $\P\in M$ is $\kappa$-c.c. If $G\subseteq \P$ is generic over $V$, then $M[G]^{<\kappa}\cap V[G]\subseteq M[G]$.
\end{lemma}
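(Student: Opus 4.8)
The plan is to show that any sequence $s=\<a_\xi\st \xi<\delta\>$ lying in $V[G]$ with $\delta<\kappa$ and each $a_\xi\in M[G]$ already has a $\P$-name in $M$, whence $s=\dot{s}^G\in M[G]$; since an arbitrary element of $M[G]^{<\kappa}\cap V[G]$ is exactly such an $s$, this suffices. Fix a $\P$-name $\dot{s}\in V$ for $s$ and a condition $p_0\in G$ forcing that $\dot{s}$ is a function with domain $\check\delta$ whose range is contained in $M[\dot{G}]$; such a $p_0$ exists because each $a_\xi\in M[G]$. The strategy is to produce, for each $\xi<\delta$, a single name $\dot{t}_\xi\in M$ with $p_0\forces \dot{s}(\xi)=\dot{t}_\xi$, and then to collect the $\dot{t}_\xi$ into one name lying in $M$ by invoking the $<\kappa$-closure $M^{<\kappa}\cap V\subseteq M$.

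For the per-coordinate step, fix $\xi<\delta$. Since $p_0\forces \dot{s}(\xi)\in M[\dot{G}]$, a density argument using genericity shows that the set of conditions $a\leq p_0$ which force $\dot{s}(\xi)=\sigma$ for some explicit $\P$-name $\sigma\in M$ is dense below $p_0$. Choose a maximal antichain $A_\xi$ of such conditions and, for each $a\in A_\xi$, a witnessing name $\sigma_a\in M$ with $a\forces \dot{s}(\xi)=\sigma_a$. Because $\P$ is $\kappa$-c.c. we have $|A_\xi|<\kappa$, and because $A_\xi\subseteq\P\subseteq M$ consists of fewer than $\kappa$ elements of $M$, any enumeration of $A_\xi$ is a $<\kappa$-sequence of elements of $M$ lying in $V$; by $M^{<\kappa}\cap V\subseteq M$ it follows that $A_\xi\in M$, and likewise the assignment $a\mapsto\sigma_a$ belongs to $M$. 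Working inside $M$, which satisfies $\ZFC^-$, I would mix the $\sigma_a$ along the antichain $A_\xi$ to obtain a name $\dot{t}_\xi\in M$ with $a\forces \dot{t}_\xi=\sigma_a$ for every $a\in A_\xi$; since $A_\xi$ is maximal below $p_0$, this yields $p_0\forces \dot{s}(\xi)=\dot{t}_\xi$.

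Finally I would assemble the coordinates. The sequence $\<\dot{t}_\xi\st\xi<\delta\>$ has length $\delta<\kappa$ and all of its entries lie in $M$, so by $M^{<\kappa}\cap V\subseteq M$ it belongs to $M$; from it one forms in $M$ the canonical name $\dot{t}$ for the sequence of realizations, whence $p_0\forces \dot{s}=\dot{t}$. As $p_0\in G$ we conclude $s=\dot{s}^G=\dot{t}^G\in M[G]$, as desired. The only genuine obstacle is guaranteeing that the objects built during the argument—the antichains $A_\xi$, the name-assignments $a\mapsto\sigma_a$, and ultimately the sequence of mixed names—land in $M$ rather than merely in $V$; this is precisely where the two hypotheses cooperate, the $\kappa$-c.c. keeping every antichain of size $<\kappa$ so that the $<\kappa$-closure of $M$ can absorb it.
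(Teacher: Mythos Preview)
The paper does not actually prove this lemma; it is listed among several ``standard lemmas'' in Section~\ref{section_preserving_weak_compactness} with a reference to \cite{Cummings:Handbook} and no argument given. Your proof is correct and is precisely the standard argument one finds in that reference: use the $\kappa$-c.c.\ to bound the size of the deciding antichains, use $M^{<\kappa}\cap V\subseteq M$ to pull those antichains and the associated name-assignments into $M$, mix inside $M$, and assemble the resulting $<\kappa$-sequence of names again by closure. One minor remark: the mixing step only requires that $M$ see $A_\xi$ as an antichain (not that $M$ recognize its maximality, which would involve the $V$-name $\dot{s}\notin M$); the maximality below $p_0$ is then used in $V$ to conclude $p_0\forces\dot{s}(\xi)=\dot{t}_\xi$. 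With that reading your argument goes through without change.
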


\begin{lemma}\label{lemma_ground_closure}
Suppose that $M$ is a $\kappa$-model in $V$, so that in particular $M^{<\kappa}\cap V\subseteq M$, $\P\in M$ is some forcing notion and there is a filter $G\in V$ which is generic for $\P$ over $M$. Then $M[G]^{<\kappa}\cap V\subseteq M[G]$.
\end{lemma}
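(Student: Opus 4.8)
The plan is to show that any ${<}\kappa$-sequence of elements of $M[G]$ lying in $V$ can be captured by a single $\P$-name belonging to $M$, and therefore lies in $M[G]$; the crux is that the ${<}\kappa$-closure of $M$ in $V$ lets us pull a sequence of names down into $M$.

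First I would fix an ordinal $\mu<\kappa$ and a sequence $\vec{a}=\langle a_\xi\st\xi<\mu\rangle\in V$ with each $a_\xi\in M[G]$; the goal is to show $\vec{a}\in M[G]$. By definition of the evaluation $M[G]=\{\dot{b}^G\st\dot{b}\in M\text{ a }\P\text{-name}\}$, every element of $M[G]$ is the $G$-interpretation of some $\P$-name in $M$. Using the axiom of choice in $V$, I would select for each $\xi<\mu$ a name $\dot{a}_\xi\in M$ with $(\dot{a}_\xi)^G=a_\xi$. This produces a sequence $\langle\dot{a}_\xi\st\xi<\mu\rangle$ which is an element of $V$ and all of whose entries lie in $M$.

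The key step is to observe that this sequence of names is itself an element of $M$. This is exactly where the hypothesis $M^{<\kappa}\cap V\subseteq M$ is used: $\langle\dot{a}_\xi\st\xi<\mu\rangle$ is a sequence of length $\mu<\kappa$ of elements of $M$ that belongs to $V$, so it belongs to $M$. Working now inside $M$, which models $\ZFC^-$, I would assemble the individual names into a single name for the whole sequence. Writing $\text{op}(\sigma,\tau)$ for the canonical name of the Kuratowski ordered pair, which satisfies $\text{op}(\sigma,\tau)^G=((\sigma)^G,(\tau)^G)$, I would define
\[\dot{a}=\{\langle\text{op}(\check{\xi},\dot{a}_\xi),\1\rangle\st\xi<\mu\}.\]
Since $\mu$ and the sequence $\langle\dot{a}_\xi\st\xi<\mu\rangle$ both lie in $M$, the collection axiom of $\ZFC^-$ guarantees that the range of the $M$-definable map $\xi\mapsto\langle\text{op}(\check{\xi},\dot{a}_\xi),\1\rangle$ is a set in $M$, so $\dot{a}$ is a genuine $\P$-name in $M$. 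Evaluating gives $(\dot{a})^G=\{(\xi,(\dot{a}_\xi)^G)\st\xi<\mu\}=\{(\xi,a_\xi)\st\xi<\mu\}=\vec{a}$, whence $\vec{a}\in M[G]$, as desired.

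I do not expect a serious obstacle here; the one point requiring care is the verification that the chosen sequence of names lands in $M$, which rests entirely on the ${<}\kappa$-closure of the $\kappa$-model $M$ in $V$ and not on any genericity of $G$ over $V$. Indeed $G$ is only assumed to be generic for $\P$ over $M$ and to lie in $V$, which is precisely what is needed for $M[G]$ to be a well-defined set inside $V$ and for the interpretations $(\dot{a}_\xi)^G$ and $(\dot{a})^G$ to be computed in $V$.
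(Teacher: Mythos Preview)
Your argument is correct and is precisely the standard proof of this fact. The paper does not supply its own proof of this lemma; it is stated there without proof as one of several ``standard lemmas'' on lifting embeddings, with a reference to \cite{Cummings:Handbook} for details. So there is nothing to compare against, and your write-up would serve perfectly well as the omitted verification.
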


Recall that for an ordinal $\alpha$ and poset $\P$ we may define a game $\mathcal{G}_\alpha(\P)$ as follows. Players I and II take turns choosing conditions which form a descending sequence in $\P$ such that Player II must play first at limit stages and Player I makes the first move of the game. Player I wins the game if at some stage $\beta<\alpha$ there is no condition below all previously played conditions and thus no way for play to continue. Otherwise Player II wins and the plays of the game form a decreasing sequence in $\P$ of length $\alpha$. We say that $\P$ is \emph{${<}\kappa$-strategically closed} if and only if for every $\alpha<\kappa$ Player II has a winning strategy for $\mathcal{G}_\alpha(\P)$. The poset $\P$ is \emph{$\kappa$-strategically closed} if and only if Player II has a winning strategy in $\mathcal{G}_\kappa(\P)$.

\begin{lemma}\label{lemma_diagonalization_criterion}
Suppose $M$ is a $\kappa$-model in $V$, so in particular $M^{<\kappa}\cap V\subseteq M$. Furthermore, suppose $\P\in M$ is a forcing notion with $V\models$ ``$\P$ is $\kappa$-strategically closed'' and $p\in \P$. Then there is a filter $G\in V$ generic for $\P$ over $M$ with $p\in G$.
\end{lemma}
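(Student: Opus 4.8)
The plan is to construct $G$ by playing the game $\mathcal{G}_\kappa(\P)$ inside $V$, letting Player II follow a fixed winning strategy while we control Player I so as to diagonalize against all dense subsets of $\P$ that lie in $M$. First I would use $|M|=\kappa$ to fix an enumeration $\<D_\alpha\st\alpha<\kappa\>$ of all dense subsets of $\P$ belonging to $M$ (padding with $\P$ itself if there are fewer than $\kappa$ of them), together with a winning strategy $\sigma\in V$ for Player II in $\mathcal{G}_\kappa(\P)$, which exists precisely because $V\models$``$\P$ is $\kappa$-strategically closed''.

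Next I would recursively build a play of $\mathcal{G}_\kappa(\P)$ in $V$. Fix a bijection between the stages at which Player I moves and $\kappa$, arranging that stage $0$ is assigned index $0$. At stage $0$, let Player I play $p_0\le p$ with $p_0\in D_0$, which is possible by density of $D_0$ below $p$; at a later stage assigned index $\alpha$, if $q$ is the condition most recently played, let Player I play some $r\le q$ with $r\in D_\alpha$, again by density; and at every move of Player II, in particular at every limit stage (where Player II moves first), let Player II play according to $\sigma$. Because $\sigma$ is winning, this play never terminates prematurely, so it runs through all $\kappa$ stages and yields a descending sequence of conditions in which Player I moves $\kappa$ times and hence meets every $D_\alpha$.

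Finally I would set $G=\{q\in\P\st\exists\beta\ (p_\beta\le q)\}$, the upward closure of the sequence, and check the required properties. Upward closure is immediate; downward directedness holds since any two elements of $G$ lie above some $p_{\beta_1},p_{\beta_2}$ and hence above $p_{\max(\beta_1,\beta_2)}\in G$, so $G$ is a filter. Genericity over $M$ follows because each dense $D\in M$ is some $D_\alpha$ and was met by a condition $p_\beta\in G$; moreover $p\in G$ since $p_0\le p$. As the entire recursion is carried out in $V$ from the parameters $M$, $\sigma$, and the enumeration, all of which are in $V$, we obtain $G\in V$.

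The one point that carries the argument, and the step I expect to be the main obstacle, is surviving the limit stages so that the descending sequence actually reaches length $\kappa$: this is exactly where the \emph{full} $\kappa$-strategic closure is used. A single winning strategy for the length-$\kappa$ game guarantees coherently that lower bounds exist at every limit $\lambda<\kappa$ simultaneously, which is what allows the sequence to reach length $\kappa$ and thereby meet all $\kappa$ dense sets of $M$; mere ${<}\kappa$-strategic closure would only let us survive each limit via a possibly different strategy and would not obviously produce a single sequence of the required length. No use of $\Pi^1_1$-indescribability or of the structure of $M$ beyond $|M|=\kappa$ is needed, and the only nonconstructive ingredient is the appeal to choice in $V$ to select Player I's moves.
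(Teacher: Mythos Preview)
The paper does not actually prove this lemma; it is listed among several standard facts with a reference to \cite{Cummings:Handbook} for details. Your argument is correct and is precisely the standard diagonalization proof one would expect: enumerate the $\kappa$-many dense sets of $M$, let Player I meet them one by one while Player II follows a fixed winning strategy for $\mathcal{G}_\kappa(\P)$ so that limits are passed, and take the upward closure.
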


\subsection{Shooting $1$-clubs}

Hellsten proved that if $E\subseteq\kappa$ and $\kappa\setminus E$ are both weakly compact then there is a natural forcing $\H(E)$ which kills the weak compactness of $\kappa\setminus E$ by shooting a $1$-club through $E$ and preserves the weak compactness of $\kappa$ as well as every weakly compact subset of $E$. Before reviewing some of the details of Hellsten's forcing a few remarks are in order. Let us highlight a few ways in which Hellsten's forcing differs from typical club shooting forcing. Since Hellsten's forcing preserves the weak compactness of $\kappa$ and adds a cofinal subset of $\kappa$ one should expect that $\H(E)$ is an Easton-support iteration of length $\kappa+1$. Additionally, Hellsten's forcing is distinguished from club shooting forcings by the fact that no extra assumption must be made about the shape of the weakly compact set in order for $\H(E)$ to preserve cardinals.

\begin{theorem}[Hellsten, \cite{MR2026390}]\label{theorem_1_club_shooting}
Suppose $E$ is a weakly compact subset of $\kappa$. There is a forcing extension in which $E$ contains a $1$-club, all weakly compact subsets of $E$ remain weakly compact and thus $\kappa$ remains a weakly compact cardinal.
\end{theorem}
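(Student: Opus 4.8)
The plan is to define the natural poset $\H(E)$ that shoots a $1$-club through $E$, check that $\H(E)$ is tame enough to preserve all cardinals and add no bounded subsets of $\kappa$, and then preserve the weak compactness of $\kappa$---indeed of every weakly compact $S\subseteq E$---by lifting an elementary embedding chosen so that $\kappa\in j(S)$ and hence $\kappa\in j(E)$.

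Let $\H(E)$ be the set of bounded $1$-closed subsets $c\subseteq E$ (so $c\subseteq E$ is bounded in $\kappa$, and whenever $\gamma\le\sup c$ is inaccessible with $c\cap\gamma$ stationary in $\gamma$ we have $\gamma\in c$), ordered by end-extension: $d\le c$ iff $d\cap(\sup c+1)=c$; as the remark in the introduction indicates, this may be presented as an Easton-support iteration of length $\kappa+1$. The properties to establish are: (i) $\H(E)$ is ${<}\kappa$-strategically closed, hence ${<}\kappa$-distributive, so it adds no new ${<}\kappa$-sequences of ordinals and preserves all cardinals and cofinalities $\le\kappa$ and the inaccessibility of ordinals below $\kappa$; (ii) $|\H(E)|\le\kappa^{<\kappa}=\kappa$, so $\H(E)$ is $\kappa^+$-c.c.\ and preserves cardinals $\ge\kappa^+$; and (iii) for $G\subseteq\H(E)$ generic over $V$, $C_G:=\bigcup G$ is a $1$-club contained in $E$. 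For (iii): $C_G\subseteq E$ is clear, and $C_G$ is $1$-closed because if $\gamma<\kappa$ is inaccessible with $C_G\cap\gamma$ stationary in $\gamma$ then, choosing $c\in G$ with $\sup c>\gamma$, the end-extension ordering gives $C_G\cap\gamma=c\cap\gamma$, so $\gamma\in c\subseteq C_G$ by the $1$-closedness of $c$; unboundedness and stationarity are obtained by density arguments. For (i) the delicate point is that at a limit stage of the closure game the union $U$ of the conditions played so far must be (extendible to) a condition, which can fail only when $\sup U$ is inaccessible with $U$ stationary in it; excluding this case is precisely where the weak compactness (``fatness'') of $E$ is used---via the $1$-club characterization of Lemma~\ref{lemma_1_club_characterization} rather than the indescribability characterization---and this, together with the stationarity of $C_G$, is the technical heart of the lemma, following Hellsten.

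For the preservation, fix a weakly compact $S\subseteq E$ and an arbitrary $A\subseteq\kappa$ in $V[G]$; writing $A=\dot A^G$ we may take $\dot A$ to be a nice $\H(E)$-name, so $\dot A\in H_{\kappa^+}$. By the weak compactness of $S$ in $V$ (Definition~\ref{definition_weak_compactness}(2)), Lemma~\ref{lemma_build_kappa_model} and Remark~\ref{remark_build_kappa_model_for_a_sequence}, fix a $\kappa$-model $M$ with $\dot A,S,E,\H(E)\in M$ and an elementary $j:M\to N$ with critical point $\kappa$, $N$ a $\kappa$-model, and $\kappa\in j(S)$; since $S\subseteq E$ this gives $\kappa\in j(E)$. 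Computed in $N$, $j(\H(E))$ is the poset of bounded $1$-closed subsets of $j(E)$, and it factors as $\H(E)*\dot{\mathbb T}$, where $\dot{\mathbb T}$ continues the $1$-club through $j(E)$ beyond $\kappa$; here $\kappa\in j(E)$ is used, since $C_G$ is stationary in $\kappa$ and so every condition of $\mathbb T=\dot{\mathbb T}^G$ contains $\kappa$. Force with $G$ over $V$ (hence over $M$ and $N$) and pass to $N[G]$: then $N[G]$ is a $\kappa$-model in $V[G]$ and $\mathbb T$ is $\kappa$-strategically closed in $N[G]$, so by Lemma~\ref{lemma_diagonalization_criterion} there is in $V[G]$ a filter $G'$ generic for $\mathbb T$ over $N[G]$. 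Now $\bar G:=G*G'$ is generic for $j(\H(E))$ over $N$, and since $j\restriction V_\kappa=\id$ we have $j[G]=G\subseteq\bar G$; by Lemma~\ref{lemma_lifting_criterion} the embedding lifts to $j:M[G]\to N[\bar G]$. Since $\H(E)$ adds no new ${<}\kappa$-sequences over $V$ and $M,N$ are $\kappa$-models, $M[G]$ and $N[\bar G]$ are $\kappa$-models in $V[G]$; moreover $A=\dot A^G\in M[G]$, $S\in M[G]$, and $\kappa\in j(S)$. Hence $S$ is a weakly compact subset of $\kappa$ in $V[G]$ by Definition~\ref{definition_weak_compactness}(2).

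Applying this with $S=E$ shows $E$ remains weakly compact in $V[G]$, and since the existence of any weakly compact subset of $\kappa$ witnesses that $\kappa$ is a weakly compact cardinal, $\kappa$ remains weakly compact; together with $C_G\subseteq E$ being a $1$-club of $V[G]$, this proves the lemma. The main obstacle I anticipate is item (i)---the ${<}\kappa$-strategic closure of $\H(E)$ and the stationarity of $C_G$, which rest on the ``fatness'' of a weakly compact set---whereas the lifting is routine; note that the hypothesis $S\subseteq E$ is used only to secure $\kappa\in j(E)$, which is exactly what makes the lift possible and explains why this forcing kills the weak compactness of $\kappa\setminus E$ while preserving that of every weakly compact subset of $E$.
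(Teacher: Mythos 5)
There is a genuine gap, and it is located exactly where you wave at the ``remark in the introduction'': you take the forcing to be the single-step poset $T^1(E)$ of bounded $1$-closed subsets of $E$, whereas Hellsten's $\H(E)$ is genuinely an Easton-support \emph{iteration} of length $\kappa+1$ that shoots a $1$-club through $E\cap\beta$ at every Mahlo $\beta<\kappa$ before shooting one through $E$ at stage $\kappa$. The two are not interchangeable, and your key step --- the factorization $j(\H(E))\cong\H(E)*\dot{\mathbb T}$ --- is false for the single-step poset: a generic for $T^1(j(E))$ over $N$ is determined below $\kappa$ by any single condition of supremum $\geq\kappa$, so its restriction to $\kappa$ lies in $N$ and is certainly not $T^1(E)$-generic over $N$. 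Consequently $j[G]=G$ is not contained in any $N$-generic filter for $j(T^1(E))$: the would-be master condition $C_G\cup\{\kappa\}$ is not an element of $N$ (it codes the generic $G$), and there is no intermediate model in which it becomes a condition before you must force with $j(T^1(E))$. The iteration is what repairs this: since $\kappa$ is Mahlo in $N$, stage $\kappa$ of $j(\P)$ is the nontrivial forcing $T^1(j(E)\cap\kappa)=T^1(E)$, so the stage-$\kappa$ generic $H_\kappa$ is absorbed into the generic $\hat G_{j(\kappa)}$ for $\P'_{j(\kappa)}$ over $N$, and $C(\kappa)=\bigcup H_\kappa$ \emph{is} an element of $N[\hat G_{j(\kappa)}]$, where $C(\kappa)\cup\{\kappa\}$ is then a legitimate condition in $j(T^1(E))$ because $\kappa\in j(E)$.

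Two secondary points. First, you assert that the stationarity of $C_G$ follows ``by density arguments''; it does not (for a general unbounded $E$ the generic $1$-closed set need not be stationary), and in the paper it is proved simultaneously with the preservation of weak compactness: the master condition forces $\kappa\in j(C(\kappa)\cap\dot C_{G_{\kappa+1}})$ for an arbitrary club name $\dot C$ placed in $M$. Second, you misattribute where the weak compactness of $E$ enters: the $\kappa$-strategic closure of $T^1(X)$ holds for \emph{any} unbounded $X\subseteq\kappa$ with $\kappa$ inaccessible (this is Lemma \ref{lemma_1club_shooting_str_closed}, and the ${<}\kappa$-strategic closure already follows from the $\alpha^+$-closed dense sets $D_\alpha$); the weak compactness of $E$ is used only to obtain $\kappa\in j(E)$, hence the master condition, hence stationarity. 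Your lifting outline for the tail of the iteration and the use of Lemmas \ref{lemma_generic_closure}, \ref{lemma_diagonalization_criterion} and \ref{lemma_lifting_criterion} is otherwise in the spirit of the paper's argument, but as written the proof does not go through without replacing the single-step poset by the iteration.
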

Let us review some of the details of Hellsten's forcing and the salient points of the proof of Theorem \ref{theorem_1_club_shooting} which will be relevant to our proof of Theorem \ref{theorem_main_theorem}.
Let $X\subseteq\kappa$ be an unbounded subset of an inaccessible cardinal. We define a poset\[T^1(X)=\{c\subseteq X\st\text{$c$ is bounded and $1$-closed}\}\]
ordered by end extension: $c_1\leq c_2$ iff $c_2=c_1\restrict\sup\{\alpha+1\st\alpha\in c_2\}$.

One can easily see that the poset $T^1(X)$ is ${<}\kappa$-strategically closed\footnote{See Section \ref{section_preserving_weak_compactness} for our conventions on strategic closure terminology.} and thus does not add bounded subsets to $\kappa$ by observing that for any $\alpha<\kappa$, the set $D_\alpha=\{c\in T^1(X)\st \sup c >\alpha^+\}$ is an $\alpha^+$-closed dense subset of $T^1(X)$. Moreover, Hellsten proved \cite[Lemma 3]{MR2653962} that $T^1(X)$ is $\kappa$-strategically closed. 
\begin{lemma}[Hellsten, \cite{MR2653962}]\label{lemma_1club_shooting_str_closed}
If $\kappa$ is an inaccessible cardinal and $X\subseteq\kappa$ is unbounded then $T^1(X)$ is $\kappa$-strategically closed. 
\end{lemma}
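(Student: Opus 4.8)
The plan is to exhibit a winning strategy for Player II in the game $\mathcal{G}_\kappa(T^1(X))$. Several easy observations reduce what must be checked. Since $T^1(X)$ is ordered by end extension, at a limit stage $\lambda<\kappa$ any lower bound of a play $\<c_\xi\st\xi<\lambda\>$ end-extends the union $d_\lambda:=\bigcup_{\xi<\lambda}c_\xi$; moreover, writing $\rho_\xi:=\sup\{\alpha+1\st\alpha\in c_\xi\}$, one checks that $d_\lambda\cap\rho_\xi=c_\xi$ for every $\xi<\lambda$, and hence, using that each $c_\xi$ is $1$-closed, that $d_\lambda$ is automatically $1$-closed below $\sup d_\lambda$: given an inaccessible $\gamma<\sup d_\lambda$ with $d_\lambda\cap\gamma$ stationary in $\gamma$, choose $\xi$ with $\rho_\xi>\gamma$, observe that $d_\lambda\cap\gamma=c_\xi\cap\gamma$, and apply the $1$-closedness of $c_\xi$ to conclude $\gamma\in c_\xi\subseteq d_\lambda$. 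Since $d_\lambda$ is a bounded subset of $X$ (boundedness by the regularity of $\kappa$), the upshot is: $d_\lambda$ is a condition \emph{unless} $\delta:=\sup d_\lambda$ is inaccessible and $d_\lambda$ is stationary in $\delta$; if moreover $\delta\in X$, then $d_\lambda\cup\{\delta\}$ is a condition; and if $\delta\notin X$, then no condition lies below all the $c_\xi$ and the play cannot be continued. So Player II needs only to ensure that at every limit stage the union is nonstationary in its supremum whenever that supremum is inaccessible.

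Player II will accomplish this by always closing off with a jump: on any turn of Player II at a stage $\xi$ --- with $c$ the current condition if $\xi$ is a successor stage, or $c:=\bigcup_{\eta<\xi}c_\eta$ if $\xi$ is a limit stage --- Player II plays $c\cup\{\mu_\xi\}$ for some $\mu_\xi\in X$ with $\mu_\xi>\sup c$, which exists because $X$ is unbounded in $\kappa$. Appending a single ordinal above the supremum of a bounded $1$-closed subset of $X$ again yields such a set, so this is a legal move, it end-extends $c$, and the resulting condition has maximum element $\mu_\xi$; in particular every condition Player II plays has a maximum. The key consequence, kept up by induction, is: for each limit stage $\eta^*<\kappa$, the ordinal $\sigma_{\eta^*}:=\sup(\bigcup_{\eta<\eta^*}c_\eta)$ is a proper supremum of the strictly increasing sequence of maxima $\mu_\eta$ played by Player II at stages $\eta<\eta^*$, so it is not an element of $\bigcup_{\eta<\eta^*}c_\eta$; and, since Player II's move at stage $\eta^*$ adjoins a point strictly above $\sigma_{\eta^*}$, the ordinal $\sigma_{\eta^*}$ is not an element of any $c_\eta$ with $\eta\geq\eta^*$ either. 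Consequently $\sigma_{\eta^*}\notin d_\lambda$ for every limit stage $\lambda>\eta^*$.

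Now let $\lambda<\kappa$ be a limit stage of a play following this strategy; put $d:=d_\lambda$ and $\delta:=\sup d$, and assume $\delta$ is inaccessible, since otherwise $d$ is already a condition. Let $E$ be the set of all $\mu_\eta$ with $\eta<\lambda$ a stage at which Player II moved. As Player II moves at a cofinal set of stages below $\lambda$ and later conditions end-extend earlier ones, $\sup E=\delta$. Since $\cf(\delta)=\delta>\omega$, the set $C$ of limit points of $E$ below $\delta$ is a club in $\delta$. For each $\rho\in C$, let $\eta^*$ be the supremum of the set of Player II stages $\eta<\lambda$ with $\mu_\eta<\rho$; then $\eta^*<\lambda$ is a limit stage and $\rho=\sigma_{\eta^*}$, so by the previous paragraph $\rho\notin d$. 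Hence $C\subseteq\delta\setminus d$, so $d$ is nonstationary in $\delta$, so $d$ is a condition and a lower bound of the play, and Player II may legally continue by playing $d\cup\{\mu_\lambda\}$ for some $\mu_\lambda\in X$ with $\mu_\lambda>\delta$. Thus Player II is never stuck at any stage below $\kappa$, so the play lasts $\kappa$ rounds and Player II wins; therefore $T^1(X)$ is $\kappa$-strategically closed.

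The substance of the argument is in the first paragraph --- isolating ``$\delta$ inaccessible with $d_\lambda$ stationary in $\delta$'' as the only obstruction --- and in the club-of-gaps computation of the last paragraph showing that Player II's ``close off with a jump'' policy forces the union to be nonstationary at any inaccessible supremum. One point that genuinely needs care is that Player II must jump \emph{strictly above} the supremum at limit stages rather than merely replay the union: otherwise Player I could, at the very next stage, adjoin $\sigma_{\eta^*}$ itself (when $\sigma_{\eta^*}\in X$) and thereby put it into $d_\lambda$, destroying the nonstationarity. The remaining verifications --- that appending a point above the supremum preserves membership in $T^1(X)$, that end extension is respected at each move, and that Player II moves cofinally often below every limit stage --- are routine.
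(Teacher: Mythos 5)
Your proof is correct. Note that the paper does not actually prove this lemma: it only sketches the easier fact that $T^1(X)$ is ${<}\kappa$-strategically closed (via the dense $\alpha^+$-closed sets $D_\alpha$, where the point is that a union of fewer than $\kappa$ conditions has singular supremum and so incurs no $1$-closure obligation there) and cites Hellsten's Lemma~3 for full $\kappa$-strategic closure. Your ``close off with a jump'' strategy, together with the computation that the suprema $\sigma_{\eta^*}$ at earlier limit stages form a club of gaps avoiding $d_\lambda$ whenever $\sup d_\lambda$ is inaccessible, is exactly the standard argument for this kind of result and all the steps check out; in particular you correctly identify stationarity of the union at an inaccessible supremum as the only obstruction, and correctly observe that the jump must go strictly above the supremum so that Player I cannot later re-insert the gap points.
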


The forcing $\H(E)$ which Hellsten used to prove Theorem \ref{theorem_1_club_shooting} is an Easton-support iteration $\<\P_\alpha,\dot{\Q}_\beta\st \alpha\leq\kappa+1,\beta\leq\kappa\>$ such that
\begin{enumerate}
\item if $\beta\leq\kappa$ is Mahlo then $\dot{\Q}_\beta$ is a $\P_\beta$-name for $T^1(E\cap\beta)^{V^{\P_\beta}}$ and
\item otherwise $\dot{\Q}_\beta$ is a $\P_\beta$-name for trivial forcing.
\end{enumerate}

Hellsten's proof of Theorem \ref{theorem_1_club_shooting} in \cite{MR2026390} uses both the elementary embedding characterization and the $\Pi^1_1$-indescribability characterization of weak compactness. Here we give a proof of Theorem \ref{theorem_1_club_shooting} which only uses the elementary embedding characterization of weak compactness as it will be relevant later in our proof of Theorem \ref{theorem_main_theorem}.

\begin{proof}[Proof of Theorem \ref{theorem_1_club_shooting}]
Let $G_{\kappa+1}\cong G_\kappa*H_\kappa$ be generic for $\H(E)\cong\P_\kappa*\dot{T}^1(E)$ over $V$. Working in $V[G_{\kappa+1}]$, let $C(\kappa)=\bigcup H_\kappa$. In $V[G_\kappa]$, conditions in $T^1(E)$ are bounded $1$-closed subsets of $E$ and thus it follows trivially that in $V[G_{\kappa+1}]$ the set $C(\kappa)$ is a $1$-closed subset of $E$.

We will show simultaneously that in $V[G_{\kappa+1}]$, the set $C(\kappa)$ is stationary in $\kappa$ and that every weakly compact subset of $E$ remains weakly compact. Suppose $\dot{C}\in H_{\kappa^+}$ is a $\P_{\kappa+1}$-name for a club subset of $\kappa$ and $S\subseteq E$ is a weakly compact subset of $\kappa$ in $V$. Working in $V[G_{\kappa+1}]$, suppose $A\subseteq\kappa$ and let $\dot{A}\in H_{\kappa^+}$ be a $\P_{\kappa+1}$-name such that $\dot{A}_{G_{\kappa+1}}=A$. Let $M$ be a $\kappa$-model with $\dot{A},\dot{C},\dot{C}(\kappa),S,E\in M$ and let $j:M\to N$ be an elementary embedding with critical point $\kappa$ such that $\kappa\in j(S)$ where $N$ is also a $\kappa$-model. By elementarity, in $N$, $j(\<\P_\alpha,\dot{\Q}_\beta\st \alpha\leq\kappa+1,\beta\leq\kappa\>)$ is an Easton support iteration of length $j(\kappa+1)$ which we denote by $\<\P_\alpha',\dot{\Q}_\beta'\st\alpha\leq j(\kappa+1),\beta\leq j(\kappa)\>$. Since $N^{<\kappa}\cap V\subseteq N$ it follows that $\P_{\kappa+1}'=\P_{\kappa+1}$. By Lemma \ref{lemma_generic_closure}, $N[G_\kappa]^{<\kappa}\cap V[G_\kappa]\subseteq N[G_\kappa]$ and since $T^1(E)$ is $\kappa$-strategically closed in $V[G_\kappa]$, it follows that $N[G_\kappa*H_\kappa]^{<\kappa}\cap V[G_\kappa*H_\kappa]\subseteq N[G_\kappa*H_\kappa]$. Thus by Lemma \ref{lemma_diagonalization_criterion}, we may build a generic filter $K$ for $\P'_{\kappa,j(\kappa)}$ over $N[G_\kappa*H_\kappa]$. Let $\hat{G}_{j(\kappa)}=G_\kappa*H_\kappa*K$. Since conditions in $\P_\kappa$ have support bounded below the critical point of $j$ we have $j[G_\kappa]\subseteq\hat{G}_{j(\kappa)}$ and thus by Lemma \ref{lemma_lifting_criterion}, $j$ extends to $j:M[G_\kappa]\to N[\hat{G}_{j(\kappa)}]$.

Since $\kappa\in j(S)\subseteq j(E)$ it follows that $c=C(\kappa)\cup\{\kappa\}$ is a condition in $j(T^1(E))$. Since $j(T^1(E))$ is $\kappa$-strategically closed in $N[\hat{G}_{j(\kappa)}]$ and since $N[\hat{G}_{j(\kappa)}]$ is a $\kappa$-model in $V[G_{\kappa+1}]$, we may apply Lemma \ref{lemma_diagonalization_criterion} to find $\hat{H}_{j(\kappa)}$ a generic filter for $j(T^1(E))$ with $c\in\hat{H}_{j(\kappa)}$ and $\hat{H}_{j(\kappa)}\in V[G_{\kappa+1}]$. Since $c$ is below every condition in $j[H_\kappa]$ we have $j[H_\kappa]\subseteq\hat{H}_{j(\kappa)}$ and thus $j$ extends to $j:M[G_\kappa*H_\kappa]\to N[\hat{G}_{j(\kappa)}*\hat{H}_{j(\kappa)}]$. Since $\kappa\in j(S)$ it follows that $S$ is a weakly compact subset of $\kappa$ in $V[G_{\kappa+1}]$. Furthermore, since $c\in \hat{H}_{j(\kappa)}$, we have $\kappa\in j(C(\kappa)\cap \dot{C}_{G_{\kappa+1}})$ and thus $C(\kappa)\cap \dot{C}_{G_{\kappa+1}}\neq\emptyset$. Hence $C(\kappa)$ is a stationary subset of $\kappa$ in $V[G_{\kappa+1}]$.
\end{proof}

\subsection{Lottery sums}

The forcing we will use to prove Theorem \ref{theorem_main_theorem} will be an Easton support iteration of lottery sums. For the reader's convenience let us review the definition and some basic properties of the lottery sum construction. 

\begin{definition}
For each $\alpha<\eta$ suppose $\R_\alpha=(\R_\alpha,1_\alpha,\leq_\alpha)$ is a poset with greatest element $1_\alpha$. As in \cite{Hamkins:TheLotteryPreparation}, we define the lottery sum $\bigoplus_{\alpha<\eta}\R_\alpha=\bigoplus\{\R_\alpha\st \alpha<\eta\}$ to be the poset whose underlying set is
\[\bigoplus_{\alpha<\eta}\R_\alpha:=\{1\}\cup\left(\bigsqcup_{\alpha<\eta}\R_\alpha\right)=\{1\}\cup\left(\bigcup_{\alpha<\eta}(\{\alpha\}\times\R_\alpha)\right)\]
and which is ordered by letting $1$ be the greatest (weakest) element and defining $(\alpha,r)\leq (\beta,s)$ if and only if $\alpha=\beta$ and $r\leq_\alpha s$. Using terminology of Hamkins, we say that a condition $p$ in $\bigoplus_{\alpha<\eta}\R_\alpha$ \emph{opts for $\R_\alpha$} if and only if $p$ is of the form $p=(\alpha,r)$ for some $\alpha<\eta$ and some $r\in\R_\alpha$.
\end{definition}
If $G$ is generic for $\bigoplus_{\alpha<\eta}\R_\alpha$ then there is a unique $\alpha<\eta$ such that $(\alpha,1_\alpha)\in G$, and for that $\alpha$ the set $\{r\in\R_\alpha\st (\alpha,r)\in G\}$ is generic for $\R_\alpha$ over $V$. In this way, by forcing with $\bigoplus_{\alpha<\eta}\R_\alpha$ we are generically selecting one of the posets $\R_\alpha$ to force with.

\subsection{Orders of weak compactness}\label{section_orders}

Suppose $\kappa$ is a weakly compact cardinal. Elements of the boolean algebra $P(\kappa)/\Pi^1_1(\kappa)$ are written as $[X]_1$ where $X\in P(\kappa)$. We can define an operation $\Tr_{\wc}:P(\kappa)\to P(\kappa)$ analogous to the Mahlo operation by letting 
\[\Tr_{\wc}(X)=\{\gamma<\kappa\st\text{$X\cap\gamma$ is a weakly compact subset of $\gamma$}\}.\]
It is well known that if $I$ is a normal ideal on $\kappa$ then the diagonal intersection of a family of $\leq\kappa$-many subsets of $\kappa$ is independent of the indexing used modulo $I$ \cite{MR0505505}. This, together with the fact that $\Tr_{\wc}$ is a well-defined operation on $P(\kappa)/\Pi^1_1(\kappa)$ \cite{MR2252250}, allows one to calculate $\kappa^+$ iterates of $\Tr_{\wc}$ as an operation on $P(\kappa)/\Pi^1_1(\kappa)$. In summary we define
\begin{align*}
\Tr^0_{\wc}([X]_1)&=[X]_1,\\
\Tr^{\xi+1}_{\wc}([X]_1)&=\Tr_{\wc}(\Tr^\xi_{\wc}([X]_1))\text{ and }\\
\Tr^\xi_{\wc}([X]_1)&=\bigtriangleup \{\Tr^\eta_{\wc}([X]_1)\st \eta<\xi\}\text{ if $\xi<\kappa^+$ is a limit.}
\end{align*}
Notice that at limits $\xi<\kappa$ we have $\Tr^\xi_{\wc}([X]_1)=\bigcap_{\eta<\xi}\Tr^\eta_{\wc}([X]_1)$.

\begin{definition}\label{definition_alpha_weakly_compact}
Suppose $\kappa$ is a weakly compact cardinal. We say that $E\subseteq\kappa$ is $\xi$-weakly compact where $\xi<\kappa^+$ if and only if for every $\eta<\xi$ we have $\Tr^\eta_{\wc}([E]_1)\neq[\emptyset]_1=\Pi^1_1(\kappa)$. Of course, $\kappa$ is \emph{$\xi$-weakly compact} if it is an $\xi$-weakly compact subset of itself.
\end{definition} 
For more details on orders of weak compactness, and more generally orders of indescribability, see \cite{Cody:AddingANonReflectingWeaklyCompactSet} and \cite{MR2252250}.

\begin{lemma}
The weakly compact reflection principle $\Refl_{\wc}(\kappa)$ implies that $\kappa$ is $\omega$-weakly compact.
\end{lemma}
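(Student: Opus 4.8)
The goal is to show that $\Refl_{\wc}(\kappa)$ implies $\Tr^\eta_{\wc}([\kappa]_1) \neq \Pi^1_1(\kappa)$ for every $\eta < \omega$; equivalently, that $\Tr^n_{\wc}([\kappa]_1)$ is a weakly compact subset of $\kappa$ for each $n < \omega$, so that $\kappa$ is $\omega$-weakly compact in the sense of Definition \ref{definition_alpha_weakly_compact}. I would proceed by induction on $n$. The base case $n = 0$ is immediate since $\Tr^0_{\wc}([\kappa]_1) = [\kappa]_1$ and $\kappa$ is weakly compact by hypothesis. For the inductive step, I would assume $\Tr^n_{\wc}([\kappa]_1) \neq \Pi^1_1(\kappa)$, i.e. the set $E_n := \Tr^n_{\wc}(\kappa)$ (a representative of the class) is a weakly compact subset of $\kappa$, and show $\Tr^{n+1}_{\wc}([\kappa]_1) = [\Tr_{\wc}(E_n)]_1 \neq \Pi^1_1(\kappa)$, i.e. that $\Tr_{\wc}(E_n) = \{\gamma < \kappa : E_n \cap \gamma \text{ is a weakly compact subset of } \gamma\}$ is itself weakly compact.

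**Key step: reflection.** The heart of the argument is to feed the weakly compact set $E_n$ to the reflection principle. Since $E_n \in \Pi^1_1(\kappa)^+$, by $\Refl_{\wc}(\kappa)$ there is some $\gamma < \kappa$ such that $E_n \cap \gamma$ is a weakly compact subset of $\gamma$; that is, $\Tr_{\wc}(E_n) \neq \emptyset$. But this only gives one reflection point — I need $\Tr_{\wc}(E_n)$ to be positive, not merely nonempty. The standard trick is to run the reflection argument on a "spread out" version of $E_n$. Suppose toward a contradiction that $\Tr_{\wc}(E_n) \in \Pi^1_1(\kappa)$. Then, using Lemma \ref{lemma_1_club_characterization}, there is a $1$-club $C \subseteq \kappa$ disjoint from $\Tr_{\wc}(E_n)$, so that for every $\gamma \in C$ the set $E_n \cap \gamma$ is \emph{not} a weakly compact subset of $\gamma$. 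Now consider $E_n \cap C$: since $C \in \Pi^1_1(\kappa)^*$ (the dual filter) and $E_n \in \Pi^1_1(\kappa)^+$, normality gives $E_n \cap C \in \Pi^1_1(\kappa)^+$, so $E_n \cap C$ is a weakly compact subset of $\kappa$. Applying $\Refl_{\wc}(\kappa)$ to $E_n \cap C$ yields some $\gamma < \kappa$ with $(E_n \cap C) \cap \gamma = E_n \cap C \cap \gamma$ a weakly compact subset of $\gamma$; in particular $E_n \cap \gamma \supseteq E_n \cap C \cap \gamma$ is weakly compact in $\gamma$ (weak compactness being upward-closed among subsets of $\gamma$). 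Moreover, since $E_n \cap C \cap \gamma$ is weakly compact in $\gamma$ it is in particular stationary in $\gamma$, and since $C$ is $1$-closed (and $\gamma$ is inaccessible, which follows because a weakly compact subset of $\gamma$ forces $\gamma^{<\gamma} = \gamma$, hence $\gamma$ inaccessible — one should check this or instead note $\gamma \in \Tr_{\wc}(E_n)$ forces inaccessibility directly) we get $\gamma \in C$. But then $\gamma \in C \cap \Tr_{\wc}(E_n)$, contradicting $C \cap \Tr_{\wc}(E_n) = \emptyset$.

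**The main obstacle.** The delicate point is the interplay between "$1$-closed" and the reflection: I need $C$ to be $1$-closed at the specific reflection point $\gamma$ produced by $\Refl_{\wc}$, which requires knowing that $E_n \cap C \cap \gamma$ is stationary in $\gamma$ and that $\gamma$ is inaccessible. Stationarity is automatic from weak compactness (every weakly compact subset of $\gamma$ is stationary, since the nonstationary ideal is contained in $\Pi^1_1(\gamma)$ — indeed $\Pi^1_0(\gamma) \subseteq \Pi^1_1(\gamma)$ and stationarity is essentially $\Pi^1_0$-positivity at an inaccessible). Inaccessibility of $\gamma$ needs a short argument: if $\gamma$ carries a weakly compact subset in the sense of Definition \ref{definition_weak_compactness}, then in particular $\gamma^{<\gamma} = \gamma$ and there are elementary embeddings with critical point $\gamma$ from $\gamma$-models, which forces $\gamma$ to be inaccessible. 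Once these two facts are in hand the contradiction closes cleanly. I would also remark that the same argument, combined with a diagonal-intersection bookkeeping, would show $\Tr^\lambda_{\wc}([\kappa]_1) \neq \Pi^1_1(\kappa)$ for limit $\lambda < \kappa$ only if one had the stronger hypothesis that reflection points can be found below any prescribed bound — but for the stated lemma the finite induction above suffices, since at finite stages $\Tr^n_{\wc}$ is a genuine iterated trace operation with no diagonal intersection involved.
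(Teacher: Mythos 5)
Your proof is correct and follows essentially the same route as the paper's: the paper's two-line argument simply asserts that $\Refl_{\wc}(\kappa)$ makes $\Pi^1_1(\kappa)^+$ closed under $\Tr_{\wc}$ and then iterates finitely, and your ``key step'' (reflecting $E_n\cap C$ for a $1$-club $C$ and using $1$-closedness of $C$ together with Lemma \ref{lemma_1_club_characterization}) is precisely the standard justification of that closure claim, with the inaccessibility and stationarity details correctly attended to.
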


\begin{proof}
$\Refl_{\wc}(\kappa)$ implies that the collection $\Pi^1_1(\kappa)^+$ of weakly compact subsets of $\kappa$ is closed under the weakly compact trace operation $\Tr_{\wc}$. Thus $\Tr_{\wc}^n([\kappa]_1)\neq[\emptyset]_1$ for all $n<\omega$.
\end{proof}

In what follows we will use the following characterization of the $\omega$-weak compactness of a cardinal. If $\beta$ is a cardinal we define
\[S^\beta_0=\{\alpha<\beta\st \text{$\alpha$ is Mahlo}\}.\]
For an ordinal $\xi$ with $0<\xi<\beta$ we define
\[S^\beta_\xi=\{\alpha<\beta\st\text{$\alpha$ is $\xi$-weakly compact}\}.\]

\begin{lemma}\label{lemma_characterization_of_omega_weak_compactness}
$\kappa$ is $\omega$-weakly compact if and only if $S^\kappa_n\in\Pi^1_1(\kappa)^+$ for every $n<\omega$.
\end{lemma}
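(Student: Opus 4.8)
The plan is to characterize $\omega$-weak compactness via the trace operation and unwind the definitions of the sets $S^\kappa_n$. Recall that $\kappa$ is $\omega$-weakly compact if and only if $\Tr^n_{\wc}([\kappa]_1) \neq [\emptyset]_1$ for every $n < \omega$, which by definition of the trace operation and the meaning of positivity modulo $\Pi^1_1(\kappa)$ says precisely that the set $\Tr^n_{\wc}(\kappa)$ is a weakly compact subset of $\kappa$ for each $n$, where I am writing $\Tr^n_{\wc}(\kappa)$ for any representative of $\Tr^n_{\wc}([\kappa]_1)$. So it suffices to show, by induction on $n < \omega$, that the set $S^\kappa_n$ is, modulo $\Pi^1_1(\kappa)$, equal to $\Tr^n_{\wc}([\kappa]_1)$ (equivalently, that $S^\kappa_n$ and $\Tr^n_{\wc}(\kappa)$ differ by a non-weakly-compact set).

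First I would handle the base-type cases. For $n = 0$: $S^\kappa_0 = \{\alpha < \kappa : \alpha \text{ is Mahlo}\}$, and I claim this agrees modulo $\Pi^1_1(\kappa)$ with $\Tr^0_{\wc}([\kappa]_1) = [\kappa]_1$. Indeed, every weakly compact subset of $\kappa$ is stationary and concentrates on inaccessibles, and more: it is a standard fact that the set of non-Mahlo cardinals below a weakly compact $\kappa$ is not a weakly compact subset of $\kappa$ (one checks that the set of Mahlo cardinals below $\kappa$ is $1$-club, using Lemma \ref{lemma_1_club_characterization}, or argues directly via an elementary embedding: if $j : M \to N$ has critical point $\kappa$ then $\kappa$ is Mahlo in $N$, hence $\kappa \in j(S^\kappa_0)$). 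Thus $S^\kappa_0 \in \Pi^1_1(\kappa)^+$ and $\kappa \setminus S^\kappa_0 \in \Pi^1_1(\kappa)$, so $[S^\kappa_0]_1 = [\kappa]_1 = \Tr^0_{\wc}([\kappa]_1)$. For the successor step, suppose $[S^\kappa_n]_1 = \Tr^n_{\wc}([\kappa]_1)$. Then by definition
\[
\Tr^{n+1}_{\wc}([\kappa]_1) = \Tr_{\wc}(\Tr^n_{\wc}([\kappa]_1)) = \Tr_{\wc}([S^\kappa_n]_1) = [\Tr_{\wc}(S^\kappa_n)]_1,
\]
using that $\Tr_{\wc}$ is well-defined on $P(\kappa)/\Pi^1_1(\kappa)$. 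So I must show $\Tr_{\wc}(S^\kappa_n)$ agrees modulo $\Pi^1_1(\kappa)$ with $S^\kappa_{n+1}$. By definition $\Tr_{\wc}(S^\kappa_n) = \{\gamma < \kappa : S^\kappa_n \cap \gamma \text{ is a weakly compact subset of }\gamma\}$. The point is that for $\gamma$ inaccessible (in fact weakly compact), $S^\kappa_n \cap \gamma = S^\gamma_n$, so $\Tr_{\wc}(S^\kappa_n) \cap \WC(\kappa)$, where $\WC(\kappa)$ denotes the weakly compact cardinals below $\kappa$, equals $\{\gamma \in \WC(\kappa) : S^\gamma_n \in \Pi^1_1(\gamma)^+\}$; by the induction hypothesis applied at $\gamma$ in place of $\kappa$ — more precisely, by the analogue of the claim "$[S^\gamma_n]_1 = \Tr^n_{\wc}([\gamma]_1)$" — this is the set of $\gamma \in \WC(\kappa)$ that are $(n+1)$-weakly compact, which is $S^\kappa_{n+1} \cap \WC(\kappa)$. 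Since $\WC(\kappa) \in \Pi^1_1(\kappa)^*$ (its complement in $\kappa$ is not weakly compact, again by the embedding argument), throwing away $\kappa \setminus \WC(\kappa)$ changes nothing modulo the ideal, giving $[\Tr_{\wc}(S^\kappa_n)]_1 = [S^\kappa_{n+1}]_1$.

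The main obstacle is the bookkeeping in the successor step: I need the induction hypothesis in the uniform form "$[S^\gamma_m]_1 = \Tr^m_{\wc}([\gamma]_1)$ for every weakly compact $\gamma \le \kappa$ and every $m \le n$," not merely at $\kappa$, and I need to know that $\xi$-weak compactness is correctly computed inside $V_\gamma$ for weakly compact $\gamma$ — i.e. that $\alpha < \gamma$ is $\xi$-weakly compact (in $V$) if and only if it is so as computed in $V_\gamma$, for $\xi < \gamma$. This absoluteness holds because the relevant notions ($\Pi^1_1$-indescribability of subsets of $\alpha$, the trace operation and its iterates up to $\alpha^+ < \gamma$) are computed correctly in $V_\gamma$ when $\gamma$ is weakly compact (hence inaccessible, so $V_\gamma \models \ZFC$ and $P(\alpha) \subseteq V_\gamma$ and $\alpha^+ < \gamma$). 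Once this absoluteness is in hand the induction runs cleanly. I would state it as a preliminary observation and then carry out the two-line induction above, concluding: $\kappa$ is $\omega$-weakly compact iff $\Tr^n_{\wc}([\kappa]_1) \neq [\emptyset]_1$ for all $n$ iff $[S^\kappa_n]_1 \neq [\emptyset]_1$ for all $n$ iff $S^\kappa_n \in \Pi^1_1(\kappa)^+$ for all $n$.
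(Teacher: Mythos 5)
The paper states Lemma \ref{lemma_characterization_of_omega_weak_compactness} without proof, so there is nothing of the authors' to compare against; I can only assess your argument on its own terms. Your plan --- prove by induction on $n$, uniformly for all weakly compact $\gamma\le\kappa$, that $[S^\gamma_n]_1=\Tr^n_{\wc}([\gamma]_1)$ in $P(\gamma)/\Pi^1_1(\gamma)$, and then read the lemma off Definition \ref{definition_alpha_weakly_compact} --- is the right one. The base case is fine (the target model $N$ of any embedding $j:M\to N$ with critical point $\kappa$ does see that $\kappa$ is Mahlo, since $N$ computes $P(\alpha)$ correctly for $\alpha<\kappa$ and every club of $N$ is a club of $V$), and the reduction of the successor step to the pointwise equivalence at weakly compact $\gamma<\kappa$, via the uniform induction hypothesis, is also fine; you correctly note in passing that $\Tr^n_{\wc}([\gamma]_1)\neq[\emptyset]_1$ is equivalent to the conjunction over all $\eta\le n$, which holds because $\Tr_{\wc}([\emptyset]_1)=[\emptyset]_1$.

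One assertion in your successor step is false as stated: it is not true in general that $\WC(\kappa)\in\Pi^1_1(\kappa)^*$. If $\kappa$ is the least weakly compact cardinal then $\WC(\kappa)=\emptyset$, and the ``embedding argument'' you invoke breaks down because the target $\kappa$-model $N$ need not think $\kappa$ is weakly compact --- unlike Mahloness, weak compactness is $\Pi^1_2$ and does not reflect to $N$. Fortunately you do not need this claim: both $\Tr_{\wc}(S^\kappa_n)$ and $S^\kappa_{n+1}$ are outright subsets of $\WC(\kappa)$ (a subset of $\gamma$ can only be a weakly compact subset of $\gamma$ if $\gamma$ itself is weakly compact, and every $(n+1)$-weakly compact cardinal is weakly compact), so once the two sets are seen to agree on $\WC(\kappa)$ they are literally equal, and no appeal to the dual filter is required. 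With that one repair the induction closes. (The absoluteness discussion about computing $\xi$-weak compactness inside $V_\gamma$ is likewise dispensable for your route: $S^\kappa_n\cap\gamma=S^\gamma_n$ holds by definition when everything is computed in $V$; it would only be needed if you argued via the $\Pi^1_1$-indescribability characterization.)
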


\begin{remark}
Using Lemma \ref{lemma_1_club_characterization} and Lemma \ref{lemma_characterization_of_omega_weak_compactness}, it follows that $\kappa$ is $\omega$-weakly compact if and only if $S^\kappa_n\cap C\neq\emptyset$ for every $1$-club $C\subseteq\kappa$.
\end{remark}

\section{The weakly compact ideal after forcing}\label{section_weakly_compact_ideal_after_forcing}

Recall that if $\P$ is a $\kappa$-c.c. forcing notion then it follows that after forcing with $\P$ the nonstationary ideal on $\kappa$ equals the ideal generated by the ground model nonstationary ideal
\[\forces_{\P} \text{$\dot{\NS}_\kappa=\overline{\check{\NS}_\kappa}$}.\]
To see that $\forces_{\P} \text{$\dot{\NS}_\kappa\supseteq\overline{\check{\NS}_\kappa}$}$ just note that if $\dot{X}$ is forced to have a ground-model cover $Y\in V\cap \NS_\kappa^V$, then any club $C\subseteq\kappa\setminus Y$ witnesses that $\dot{X}$ is forced to be nonstationary in $V^\P$. Conversely, to see that $\forces_{\P} \text{$\dot{\NS}_\kappa\subseteq\overline{\check{\NS}_\kappa}$}$, notice that every club in the extension contains a ground-model club.

We would like to generalize the previously mentioned result about $\kappa$-c.c. forcing and the nonstationary ideal on $\kappa$ to the weakly compact ideal. For what forcings $\P$ do we have $\forces_{\P}$ $\dot{\Pi}^1_1(\kappa)=\overline{\check{\Pi}^1_1(\kappa)}$? 
Kunen showed that in certain settings there is a $\kappa$-c.c. $\P$ that can turn a ground model non weakly compact cardinal into a weakly compact cardinal. However, if we assume $\kappa$ is a weakly compact cardinal in the ground model then non weakly compact subsets $\kappa$ cannot become weakly compact after $\kappa$-c.c. forcing.\footnote{Thanks to Sean Cox and Monroe Eskew for pointing this out.}

\begin{lemma}\label{lemma_weakly_compact_ideal_after_kappa_cc}
Suppose $\kappa$ is a weakly compact cardinal, $\P$ is $\kappa$-c.c. and $G$ is generic for $\P$ over $V$. Then $\Pi^1_1(\kappa)^V\subseteq \Pi^1_1(\kappa)^{V[G]}$.
\end{lemma}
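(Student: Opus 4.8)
The plan is to argue the contrapositive: if $X\subseteq\kappa$ is not a weakly compact subset of $\kappa$ in $V$, then it is not a weakly compact subset of $\kappa$ in $V[G]$. I would carry this out via Sun's $1$-club characterization, Lemma \ref{lemma_1_club_characterization}.

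First I would dispose of a degenerate case. A large $\kappa$-c.c.\ forcing can fail to preserve $\kappa^{<\kappa}=\kappa$ (e.g.\ adding $\kappa$ or more Cohen reals), so it is possible that $\kappa$ is no longer weakly compact in $V[G]$. But in that case no $X\subseteq\kappa$ is a weakly compact subset of $\kappa$ in $V[G]$ at all: by Definition \ref{definition_weak_compactness} any elementary embedding witnessing that some $X$ is a weakly compact subset would in particular witness that $\kappa$ is weakly compact. Hence $\Pi^1_1(\kappa)^{V[G]}=P(\kappa)^{V[G]}\supseteq P(\kappa)^V\supseteq\Pi^1_1(\kappa)^V$, and there is nothing to prove. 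So from now on assume $\kappa$ is weakly compact in $V[G]$, which makes Lemma \ref{lemma_1_club_characterization} available in both $V$ and $V[G]$.

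Now fix $X\in\Pi^1_1(\kappa)^V$. By Lemma \ref{lemma_1_club_characterization} in $V$ there is a $1$-club $C\subseteq\kappa$ in $V$ with $C\cap X=\emptyset$. The main point is to check that $C$ remains a $1$-club in $V[G]$. Its stationarity in $\kappa$ is preserved since $\P$ is $\kappa$-c.c. For $1$-closure (Definition \ref{definition_1_closed_1_club}), let $\gamma<\kappa$ be inaccessible in $V[G]$ with $C\cap\gamma$ stationary in $\gamma$ in $V[G]$. Inaccessibility is downward absolute --- regularity, being a limit cardinal, and being a strong limit all pass from $V[G]$ to $V$ --- so $\gamma$ is inaccessible in $V$. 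Moreover $C\cap\gamma$ is stationary in $\gamma$ already in $V$: any $D\subseteq\gamma$ that is club in $V$ is still closed and unbounded in the fixed ordinal $\gamma$ in $V[G]$, so a ground-model club disjoint from $C\cap\gamma$ would contradict stationarity in $V[G]$. Since $C$ is $1$-closed in $V$, we get $\gamma\in C$. Thus $C$ is a $1$-club in $V[G]$, and it is still disjoint from $X$, so by Lemma \ref{lemma_1_club_characterization} in $V[G]$, $X$ is not a weakly compact subset of $\kappa$ in $V[G]$, i.e.\ $X\in\Pi^1_1(\kappa)^{V[G]}$, completing the argument.

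The one place requiring care --- the main obstacle, such as it is --- is the preservation of $1$-closure. Here it is essential to note that although $\P$ need not be $\gamma$-c.c.\ for $\gamma<\kappa$, so stationary subsets of such $\gamma$ need not be preserved upward, the definition of $1$-closure only requires control in the downward direction: that sets $C\cap\gamma$ which are nonstationary in $V$ stay nonstationary in $V[G]$, which is automatic from absoluteness of closed unbounded subsets of a fixed ordinal. (One could instead run the whole argument through the $\Pi^1_1$-indescribability characterization of Lemma \ref{lemma_characterizations}, but the $1$-club formulation keeps the absoluteness bookkeeping cleaner since $V_\kappa$ itself may grow in the extension.)
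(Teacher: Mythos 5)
Your proof is correct and follows essentially the same route as the paper's: fix a $1$-club $C\subseteq\kappa\setminus X$ in $V$, use the $\kappa$-c.c.\ to preserve its stationarity, and use downward absoluteness of stationarity (and of inaccessibility) at $\gamma<\kappa$ to preserve $1$-closure. Your extra care about the degenerate case where $\kappa$ might lose weak compactness, and your explicit remark that $1$-closure only needs downward control of stationarity below $\kappa$, are correct refinements of points the paper leaves implicit.
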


\begin{proof}
Suppose $X\in\Pi^1_1(\kappa)^V$, then there is a $1$-club $C$ in $V$ such that $C\subseteq\kappa\setminus X$. It will suffice to show that $C$ remains $1$-club in $V[G]$. Since $\P$ is $\kappa$-c.c. it follows that $C$ is stationary in $V[G]$. Now suppose that in $V[G]$ the set $C\cap \mu$ is stationary in $\mu$ for some inaccessible cardinal $\mu<\kappa$. Since $V\subseteq V[G]$ we see that $C\cap \mu$ is stationary in $V$ and since $C$ is $1$-club in $V$, $\mu\in C$. Thus in $V[G]$ there is a $1$-club disjoint from $X$ and hence $X\in\Pi^1_1(\kappa)^{V[G]}$.
\end{proof}

Let us now show that by the work of Hamkins \cite{MR2063629}, if we assume that $\kappa$ is an inaccessible cardinal in the ground model and $\P$ is a forcing which \emph{admits a gap below $\kappa$}, in the sense that there is a $\delta<\kappa$ such that $\P\cong\Q*\dot{\R}$ where $\Q$ is $\delta$-c.c. and $\forced_\Q$ ``$\dot{\R}$ is $\delta$-strategically closed'', then ground model non weakly compact subsets of $\kappa$ cannot become weakly compact after forcing with $\P$. Let us recall two definitions from \cite{MR2063629}. A pair of transitive classes $M\subseteq N$ satisfies the \emph{$\delta$-approximation property} if whenever $A\subseteq M$ is a set in $N$ and $A\cap a\in M$ for any $a\in M$ of size less than $\delta$ in $M$, then $A\in M$. The pair $M\subseteq N$ satisfies the \emph{$\delta$-cover property} if for every set $A$ in $N$ with $A\subseteq M$ and $|A|^N<\delta$, there is a set $B\in M$ with $A\subseteq B$ and $|B|^M<\delta$. One can easily see that many Easton-support iterations $\P$ of length greater than a Mahlo cardinal $\delta$ satisfy the $\delta$-approximation and cover properties by factoring the iteration as $\P\cong\Q*\dot{\R}$ where $\Q$ is $\delta$-c.c. and $\dot{\Q}$ is ${<}\delta$-strategically closed.

\begin{lemma}[Hamkins, \cite{MR2063629}]\label{lemma_approximation_and_cover}
Suppose that $\kappa$ is an inaccessible cardinal, $S\in P(\kappa)^V$ and $V\subseteq\overline{V}$ satisfies the $\delta$-approximation and cover properties for some $\delta<\kappa$. If $S$ is a weakly compact subset of $\kappa$ in $\overline{V}$ then $S$ is a weakly compact subset of $\kappa$ in $V$.
\end{lemma}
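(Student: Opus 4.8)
The plan is to verify the elementary‑embedding characterization of ``$S$ is a weakly compact subset of $\kappa$'' from Definition~\ref{definition_weak_compactness}(2) directly in $V$. The idea is that this form of weak compactness is witnessed by \emph{ground‑model} $\kappa$-models $M$ together with suitable normal $M$-ultrafilters, so that an embedding witnessing the weak compactness of $S$ in $\overline V$ induces an $\overline M$-ultrafilter whose restriction to such an $M$ can be shown, via the approximation property, to lie in $V$.

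I would begin by fixing an arbitrary $A\in P(\kappa)^V$; since $\kappa$ is inaccessible in $V$ we have $\kappa^{<\kappa}=\kappa$ and $H_{\kappa^+}^V\models\ZFC^-$ there, so by Lemma~\ref{lemma_build_kappa_model} we may choose, in $V$, a $\kappa$-model $M$ with $A,S\in M$. Working now in $\overline V$, I would code $M$ as a subset of $\kappa$ and apply the weak compactness of $S$ in $\overline V$ to obtain a $\kappa$-model $\overline M$ of $\overline V$ having (after taking a transitive collapse inside $\overline M$) the sets $M$ and $S$ as elements, together with an elementary embedding $j\colon\overline M\to\overline N$ with critical point $\kappa$, $\kappa\in j(S)$, and $\overline N$ a $\kappa$-model of $\overline V$. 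Since $\overline M$ is transitive and $M\in\overline M$, we get $M\subseteq\overline M$ and hence $P(\kappa)^M\subseteq\overline M$. Let $U=\{X\in P(\kappa)^{\overline M}\st\kappa\in j(X)\}$ be the induced $\overline M$-ultrafilter. Using that $\overline M$ is closed under ${<}\kappa$-sequences in $\overline V$, one checks as usual that $U$ is a normal, genuinely ${<}\kappa$-complete, nonprincipal $\overline M$-ultrafilter on $\kappa$ with $S\in U$.

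The heart of the argument, and the step I expect to be the main obstacle, is to show that $U^*:=U\cap P(\kappa)^M$ belongs to $V$. For this I would invoke the $\delta$-approximation property of the pair $V\subseteq\overline V$. Since $U^*\subseteq P(\kappa)^M\subseteq V$ and $U^*$ is a set of $\overline V$, it suffices to check that $U^*\cap a\in V$ for every $a\in V$ with $|a|^V<\delta$; we may assume $a\subseteq P(\kappa)^M$. Then $a$ is a subset of $M$ of size ${<}\kappa$, so $a\in M\subseteq\overline M$. Working in $\overline V$, the ${<}\kappa$-completeness of $U$ provides a ``seed'': the set $\bigcap\{X\in a\st X\in U\}\cap\bigcap\{\kappa\setminus X\st X\in a,\ X\notin U\}$ is an intersection of fewer than $\kappa$ members of $U$, hence lies in $U$, and any ordinal $\gamma$ in it satisfies $U\cap a=\{X\in a\st\gamma\in X\}$. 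Since $a,\gamma\in V$, this last set is computed the same in $V$, so $U^*\cap a=U\cap a\in V$; by the approximation property $U^*\in V$. (The cover property is available for whatever additional bookkeeping is required.)

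It then remains to check that, in $V$, $U^*$ is a normal, ${<}\kappa$-complete, nonprincipal $M$-ultrafilter on $\kappa$ with $S\in U^*$; each clause follows from the corresponding property of $U$ together with the facts that $M$ satisfies $\ZFC^-$ and is closed under ${<}\kappa$-sequences from $V$. Consequently the ultrapower $\Ult(M,U^*)$ formed in $V$ is well-founded --- any potential descending $\in$-chain is witnessed by an $\omega$-sequence of functions lying in $M$, whose associated sets lie in $U^*$ and hence have nonempty intersection, producing a genuine descending $\in$-chain in $M$ --- so, letting $k\colon M\to N$ be the resulting transitive ultrapower embedding, $N$ is a $\kappa$-model of $V$, $\crit(k)=\kappa$, and $\kappa\in k(S)$ (from $S\in U^*$ and normality). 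Thus $k$ witnesses in $V$ that $S$ is a weakly compact subset of $\kappa$ for the set $A$; as $A$ was arbitrary, $S$ is a weakly compact subset of $\kappa$ in $V$.
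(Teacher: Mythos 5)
Your proof is correct, but it takes a genuinely different route from the paper's. The paper treats Hamkins's machinery as a black box: it invokes \cite[Lemma 16]{MR2063629} to produce a transitive $\overline{M}\in\overline{V}$ closed under ${<}\kappa$-sequences with $M=\overline{M}\cap V\in V$, and then applies the main theorem of \cite{MR2063629} to conclude that $j\restrict M$ is an elementary embedding lying in $V$. You instead fix the $\kappa$-model $M\in V$ \emph{first}, arrange $M\in\overline{M}$ by coding, pass to the induced $\overline{M}$-ultrafilter $U$, and pull $U^*=U\cap P(\kappa)^M$ into $V$ directly via the $\delta$-approximation property, using the seed $\gamma$ to verify the small approximations $U^*\cap a=\{X\in a\st\gamma\in X\}$; the internal ultrapower of $M$ by $U^*$ then witnesses weak compactness in $V$. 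This is self-contained (it reproves the relevant fragment of Hamkins's theorem rather than citing it), and it has the side benefit of showing that for this particular direction only the approximation property is used --- your parenthetical reserving the cover property for ``additional bookkeeping'' can simply be deleted, since no such bookkeeping arises. The price is that you must carry out the standard but nontrivial verifications yourself: genuine ${<}\kappa$-completeness and normality of $U$ from $\overline{M}^{<\kappa}\cap\overline{V}\subseteq\overline{M}$, the reduction to $a\subseteq P(\kappa)^M$ when checking approximations, well-foundedness and ${<}\kappa$-closure of $\Ult(M,U^*)$, and {\L}o\'s's theorem for $\ZFC^-$ models (which is where the collection schema in the paper's definition of $\ZFC^-$ is needed). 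All of these go through as you indicate.
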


\begin{proof}
Suppose $S\in P(\kappa)^V$ is a weakly compact subset of $\kappa$ in $\overline{V}$. Fix $A\in P(\kappa)^V$. By \cite[Lemma 16]{MR2063629}, there is a transitive model $\overline{M}\in\overline{V}$ of some large fixed finite fragment $\ZFC^*$ of $\ZFC$ with $|\overline{M}|^{\overline{V}}=\kappa$ such that $\kappa,A,S\in\overline{M}$, the model $\overline{M}$ is closed under ${<}\kappa$-sequences from $\overline{V}$ and $M=\overline{M}\cap V\in V$ is a transitive model of the finite fragment $\ZFC^*$ with $|M|^V=\kappa$. Since $S$ is weakly compact in $\overline{V}$, it follows that there is an elementary embedding $j:\overline{M}\to\overline{N}$ with critical point $\kappa$ where $\overline{N}^{<\kappa}\cap\overline{V}\subseteq\overline{N}$ and $\kappa\in j(S)$. Since this embedding satisfies the hypotheses of the main theorem from \cite{MR2063629}, it follows that $j\restrict M:M\to N$ is an elementary embedding in $V$ with critical point $\kappa$. Since $A,S\in M$ and $\kappa\in (j\restrict M)(S)$ we see that $S$ is a weakly compact subset of $\kappa$ in $V$.
\end{proof}

The following theorem, which was stated in the introduction, is a generalization to the weakly compact ideal of the fact that after $\kappa$-c.c. forcing the nonstationary ideal of the extension equals the ideal generated by the ground model weakly compact ideal.

\begin{theorem3}
Suppose $\kappa$ is weakly compact cardinal, assume that $\P=\<(\P_\alpha,\dot{\Q}_\alpha)\st\alpha<\kappa\>\subseteq V_\kappa$ is an Easton-support forcing iteration such that for each $\alpha<\kappa$, $\forces_{\P_\alpha}$ ``$\dot{\Q}_\alpha$ is $\alpha$-strategically closed''. Let $\dot{\Pi}^1_1(\kappa)$ be a $\P_\kappa$-name for the weakly compact ideal of the extension $V^{\P_\kappa}$ and let $\check{\Pi}^1_1(\kappa)$ be a $\P_\kappa$-check name for the weakly compact ideal of the ground model. Then $\forces_{\P_\kappa}$ $\dot{\Pi}^1_1(\kappa)=\overline{\check{\Pi}^1_1(\kappa)}$. 
\end{theorem3}

\begin{proof}
Since $\P_\kappa$ is $\kappa$-c.c., $\forces_{\P_\kappa}$ $\dot{\Pi}^1_1(\kappa)\supseteq \overline{\check{\Pi}^1_1(\kappa)}$ follows from Lemma \ref{lemma_weakly_compact_ideal_after_kappa_cc}.

To show that $\forces_{\P_\kappa}$ $\dot{\Pi}^1_1(\kappa)\subseteq\overline{\check{\Pi}^1_1(\kappa)}$, we will show that if $p\forces_{\P_\kappa}$ $\dot{X}\in \dot{\Pi}^1_1(\kappa)$, then there is $B\in\Pi^1_1(\kappa)^V$ with $p\forces_{\P_\kappa}$ $\dot{X}\subseteq B$. Suppose $p\forces_{\P_\kappa}$ $\dot{X}\in \dot{\Pi}^1_1(\kappa)$. Since $\P_\kappa\subseteq V_\kappa$ is $\kappa$-c.c. we may assume that $\dot{X}\in H_{\kappa^+}$. By the fullness principle, take a $\P_\kappa$-name $\dot{A}\in H_{\kappa^+}$ for a subset of $\kappa$ such that
\begin{align*}
p\forces_{\P_\kappa} & \text{``for every $\kappa$-model $M$ with $\kappa,\dot{A},\dot{X}\in M$ and for every}\tag{$*$}\\
 & \text{elementary embedding $j:M\to N$ where $N$ is a $\kappa$-model}\\
 & \text{we have $\kappa\notin j(\dot{X})$''}
\end{align*}
Let $B=\{\alpha<\kappa\st\exists q\in\P_\kappa\ (q\leq p)\land (q\forces_{\P_\kappa} \alpha\in \dot{X})\}$ and notice that $B\in V$ and $p\forces_{\P_\kappa}$ $\dot{X}\subseteq B$. Thus, to complete the proof it will suffice to show that $B\in\Pi^1_1(\kappa)^V$.

Suppose $B\notin\Pi^1_1(\kappa)^V$. Using the weak compactness of $B$ in $V$, let $M$ be a $\kappa$-model with $\kappa,B,\dot{A},\P_\kappa,\dot{X},p,\ldots\in M$ and let $j:M\to N$ be an elementary embedding with critical point $\kappa$ such that $\kappa\in j(B)$ where $N$ is a $\kappa$-model. Since $\kappa\in j(B)$, it follows by elementarity that there is a condition $r\in j(\P_\kappa)$ with $r\leq j(p)=p$ such that $r\forces_{j(\P_\kappa)}\kappa\in j(\dot{X})$. Let $G\subseteq\P_\kappa$ be generic over $V$ with $r\restrict\kappa\in G$. By Lemma \ref{lemma_generic_closure}, since $\P_\kappa$ is $\kappa$-c.c. the model $N[G]$ is closed under ${<}\kappa$-sequences in $V[G]$. Furthermore, the poset $j(\P_\kappa)/G$ is $\kappa$-strategically closed in $N[G]$. Thus, by Lemma \ref{lemma_diagonalization_criterion}, working in $V[G]$ we can build a filter $H\subseteq j(\P_\kappa)/G$ which is generic over $N[G]$ with $r/G\in H$. Let $\hat{G}$ denote the filter for $j(\P_\kappa)$ obtained from $G*H$ and notice that $r\in \hat{G}$. Since conditions in $\P_\kappa$ have support bounded below the critical point of $j$, it follows that $j[G]\subseteq \hat{G}$. Thus the embedding extends to $j:M[G]\to N[\hat{G}]$. Since $r\in \hat{G}$ and $r\forces_{j(\P_\kappa)}$ $\kappa\in j(\dot{X})$, we have $\kappa\in j(\dot{X}_G)$. Notice that $p\in G$; this contradicts ($*$) since $M[G]$ and $N[\hat{G}]$ are $\kappa$-models and since $A=\dot{A}_G\in M[G]$.
\end{proof}

\section{Forcing the weakly compact reflection principle to hold at the least $\omega$-weakly compact cardinal}

Next we begin our proof of Theorem \ref{theorem_main_theorem}. Assuming the weakly compact reflection principle $\Refl_{\wc}(\kappa)$ holds we will define an Easton-support forcing iteration of length $\kappa$ which kills every $\omega$-weakly compact cardinal below $\kappa$, preserves the weakly compact reflection principle $\Refl_{\wc}(\kappa)$ and thus preserves the $\omega$-weak compactness of $\kappa$.

\begin{proof}[Proof of Theorem \ref{theorem_main_theorem}]

First we define the forcing iteration. If $\beta$ is a cardinal, recall that we defined $S^\beta_0=\{\alpha<\beta\st\text{$\alpha$ is a Mahlo cardinal}\}$ and
for an ordinal $0<\xi<\beta$ we have defined
\[S^\beta_\xi=\{\alpha<\beta\st \text{$\alpha$ is $\xi$-weakly compact}\}.\]
For each $n<\omega$ we let $T^1(\beta\setminus S^\beta_n)$ be the poset whose conditions are bounded $1$-closed subsets of $\beta\setminus S^\beta_n$ ordered by end extension. For each $m<\omega$ we let $\prod_{n\in[m,\omega)}T^1(\beta\setminus S^\beta_n )$ be the product forcing whose conditions are functions $f$ with domain $[m,\omega)$ such that $f(n)\in T^1(\beta\setminus S^\beta_n)$ for each $n\in[m,\omega)$, and whose the greatest element is $1_m:[m,\omega)\to\{\emptyset\}$. Notice that $1_m$ is the greatest (weakest) element of $\prod_{n\in[m,\omega)}T^1(\beta\setminus S^\beta_n )$ \emph{for all} $\beta$. Next we define a forcing iteration of length $\kappa$ which will shoot $1$-clubs through subsets of each weakly compact $\gamma<\kappa$.

Let $\P_\kappa=\< \P_\alpha,\dot{\Q}_\beta \st \alpha\leq\kappa, \beta<\kappa\>$ be the Easton support iteration of length $\kappa$ defined as follows.
\begin{enumerate}
\item If $\beta<\kappa$ is Mahlo in $V$ then $\dot{\Q}_\beta$ is a $\P_\beta$-name such that 
\[\forces_{\P_\beta} \dot{\Q}_\beta=\bigoplus_{m<\omega}\prod_{n\in[m,\omega)}T^1(\beta\setminus \dot{S}^\beta_n ),\]where $\dot{S}^\beta_n$ is a $\P_\beta$-name for the set of $n$-weakly compact cardinals less than $\beta$ as defined in $V^{\P_\beta}$.
\item Otherwise, $\dot{\Q}_\beta$ is a $\P_\beta$-name for trivial forcing.
\end{enumerate}

\begin{remark}\label{remark_intuitive_comment}
We first attempted to prove Theorem \ref{theorem_main_theorem} using an Easton-support iteration $\bar{\P}_\kappa=\<\bar{\P}_\alpha,\dot{\bar{\Q}}_\alpha\st \alpha\leq\kappa, \beta<\kappa\>$ such that when $\beta<\kappa$ is Mahlo $\dot{\bar{\Q}}_\beta$ is a $\bar{\P}_\beta$-name such that $\forces_{\bar{\P}_\beta}$ ``$\dot{\bar{\Q}}_\beta=\bigoplus_{m<\omega}T^1(\beta\setminus S^\beta_n)$'', and otherwise $\dot{\bar{\Q}}_\beta$ is a $\bar{\P}_\beta$-name for trivial forcing. However, we could not prove Lemma \ref{lemma_hiroshi} for this simpler iteration. It seems that the use of the lottery sum \emph{of products} is necessary in order for a certain master condition to exist.
\end{remark}

We will prove that $\P_\kappa$ witnesses Theorem \ref{theorem_main_theorem}.
For this it suffices to prove the following:

\begin{lemma}\label{lemma_no_omega_wc}
In $V^{\P_\kappa}$ there are no $\omega$-weakly compact cardinals below $\kappa$.
\end{lemma}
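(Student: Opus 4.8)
The plan is to show that every Mahlo cardinal $\gamma<\kappa$ which was $\omega$-weakly compact in $V$ has its $\omega$-weak compactness destroyed by the stage-$\gamma$ forcing, and that no new $\omega$-weakly compact cardinals are created below $\kappa$. First I would fix a Mahlo $\gamma<\kappa$ and factor $\P_\kappa\cong\P_\gamma*\dot{\Q}_\gamma*\dot{\P}_{\mathrm{tail}}$, where $\dot{\Q}_\gamma$ is (forced to be) the lottery sum $\bigoplus_{m<\omega}\prod_{n\in[m,\omega)}T^1(\gamma\setminus\dot S^\gamma_n)$. Writing $G_\gamma*g*G_{\mathrm{tail}}$ for the generic, the generic filter $g$ opts for some $m<\omega$ and hence, for every $n\in[m,\omega)$, adds a $1$-club $C^\gamma_n\subseteq\gamma\setminus S^\gamma_n$. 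The key point is that the tail forcing $\dot\P_{\mathrm{tail}}$ is (forced to be) ${\le}\gamma$-strategically closed — indeed each $\dot\Q_\beta$ for $\beta\in(\gamma,\kappa)$ is $\beta$-strategically closed and the Easton support iteration of such posets above $\gamma$ is $\gamma^+$-strategically closed — so it adds no new subsets of $\gamma$ and therefore cannot resurrect the weak compactness of any subset of $\gamma$, nor change which ordinals below $\gamma$ are Mahlo/inaccessible/stationary-correct. Hence it suffices to work in $V[G_\gamma*g]=V[G_\gamma][g]$.

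Next I would argue that in $V[G_\gamma][g]$ the cardinal $\gamma$ is not $\omega$-weakly compact. By Lemma \ref{lemma_characterization_of_omega_weak_compactness} (applied in $V[G_\gamma][g]$), $\gamma$ is $\omega$-weakly compact iff $S^\gamma_n\in\Pi^1_1(\gamma)^+$ for all $n<\omega$, where now $S^\gamma_n$ is computed in $V[G_\gamma][g]$. Pick any $n\in[m,\omega)$. The generic $g$ shoots a $1$-club $C^\gamma_n$ through the complement of the ground-model (i.e. $V[G_\gamma]$) version of $S^\gamma_n$. I must check two things: (i) the set $C^\gamma_n$ really is a $1$-club in $V[G_\gamma][g]$, which follows exactly as in the proof of Theorem \ref{theorem_1_club_shooting} — $T^1(X)$ is ${<}\gamma$-strategically closed, hence adds no bounded subsets of $\gamma$, and by Lemma \ref{lemma_1club_shooting_str_closed} it is $\gamma$-strategically closed, so stationarity of the shot club and preservation of the weak compactness of $\gamma$ go through; and (ii) the set $S^\gamma_n$ computed in $V[G_\gamma][g]$ is still contained in (the ground-model version of) $S^\gamma_n$, i.e. no ordinal $\alpha<\gamma$ becomes $n$-weakly compact that wasn't already — this is where the downward absoluteness from Lemma \ref{lemma_weakly_compact_ideal_after_kappa_cc} and Lemma \ref{lemma_approximation_and_cover} enters: since $\P_\gamma*\dot\Q_\gamma$ restricted below any inaccessible $\alpha<\gamma$ admits a gap, being $\xi$-weakly compact is downward absolute, so $(S^\gamma_n)^{V[G_\gamma][g]}\subseteq (S^\gamma_n)^{V[G_\gamma]}\subseteq(S^\gamma_n)^V$ modulo an inductive argument on $n$ (the orders of weak compactness of the $\alpha<\gamma$ can only go down). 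Then $C^\gamma_n$ is disjoint from $(S^\gamma_n)^{V[G_\gamma][g]}$, so the latter is in $\Pi^1_1(\gamma)$ and $\gamma$ fails to be $\omega$-weakly compact.

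Finally I would dispose of cardinals $\gamma<\kappa$ that are \emph{not} Mahlo in $V$: such $\gamma$ is not even weakly compact in $V$, and since every initial segment $\P_\gamma$ of the iteration admits a gap below $\gamma$, Lemma \ref{lemma_approximation_and_cover} shows $\gamma$ is not weakly compact in any $V^{\P_\alpha}$ for $\alpha\ge\gamma$, a fortiori not $\omega$-weakly compact; and the strategic closure of the tail preserves this. Combining, for every $\gamma<\kappa$ the cardinal $\gamma$ is not $\omega$-weakly compact in $V^{\P_\kappa}$. I expect the main obstacle to be step (ii) above: showing that the stage-$\gamma$ lottery-sum forcing does not inadvertently raise the order of weak compactness of some $\alpha<\gamma$ (which would make $S^\gamma_n$ larger and defeat the argument that $C^\gamma_n$ misses it). This needs the combination of $\kappa$-c.c. downward absoluteness of the weakly compact ideal (Lemma \ref{lemma_weakly_compact_ideal_after_kappa_cc}, Theorem \ref{theorem_weakly_compact_ideal_after_forcing}) together with an induction on $n$ tracking the sets $S^\gamma_n$ through the iteration, and care with the strategically-closed factors which could in principle only \emph{destroy}, never create, weak compactness at levels below $\gamma$.
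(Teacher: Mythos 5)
Your overall strategy is the paper's: restrict attention to $V$-Mahlo $\gamma$, isolate the stage-$\gamma$ lottery which opts for some $m$ and shoots $1$-clubs through $\gamma\setminus S^\gamma_n$ for $n\in[m,\omega)$, observe that the sets $S^\gamma_n$ cannot grow and that the tail adds no new subsets of $\gamma$, and conclude that one of these $1$-clubs witnesses $S^\gamma_n\in\Pi^1_1(\gamma)$ in the final model. Two points, though, where your justifications don't quite carry the weight you put on them.

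First, your step (i): you claim the stationarity of $C^\gamma_n$ ``follows exactly as in the proof of Theorem \ref{theorem_1_club_shooting}.'' That proof establishes stationarity of the shot club via a lifted embedding with a master condition, which requires the target set $E$ to be weakly compact (so that $\kappa\in j(E)$ makes $C(\kappa)\cup\{\kappa\}$ a condition), and it is set up for a single copy of $T^1$, not for the full product $\prod_{n\in[m,\omega)}T^1(\gamma\setminus S^\gamma_n)$. You never verify that $\gamma\setminus S^\gamma_n$ is weakly compact in $V[G_\gamma]$ (it is, but this needs an argument), and adapting the master-condition argument to the product is precisely the delicate point the paper isolates in Lemma \ref{lemma_hiroshi} and Remark \ref{remark_intuitive_comment} -- it is not free. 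The correct and much cheaper route, which is what the paper uses, is a direct density argument: given a name for a club and a condition in the product, catch your tail in $\omega$ steps and close off at a singular limit point, which lies in $\gamma\setminus S^\gamma_j$ for every $j$ and so may be added to all coordinates simultaneously. In particular, no preservation of the weak compactness of $\gamma$ is needed for this lemma (and you should not claim it here): if $\gamma$ ceases to be weakly compact, it is a fortiori not $\omega$-weakly compact, and otherwise the $1$-club disjoint from $S^\gamma_n$ finishes the job via Lemma \ref{lemma_1_club_characterization}.

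Second, your step (ii) is correct but heavier than necessary. Since $\Q_\gamma$ is ${<}\gamma$-strategically closed and the tail beyond stage $\gamma$ is sufficiently strategically closed, no new bounded subsets of $\gamma$ (indeed no new elements of $H_{\alpha^+}$ for $\alpha<\gamma$) appear, so one gets outright equality $(S^\gamma_n)^{V[G_\kappa]}=(S^\gamma_n)^{V[G_\gamma]}$ by induction on $n$ -- the paper does exactly this -- with no need for the Hamkins approximation-and-cover machinery or Lemma \ref{lemma_weakly_compact_ideal_after_kappa_cc}. With these two repairs your argument is the paper's proof.
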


\begin{lemma}\label{lemma_refl_P_kappa}
$\Refl_{\wc}(\kappa)$ holds in $V^{\P_\kappa}$.
\end{lemma}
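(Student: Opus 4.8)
Let $G_\kappa$ be $\P_\kappa$-generic over $V$. The plan is to verify separately the two clauses of $\Refl_{\wc}(\kappa)$ in $V[G_\kappa]$: that $\kappa$ is still weakly compact, and that every weakly compact subset of $\kappa$ has a weakly compact proper initial segment.

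First, preserving the weak compactness of $\kappa$ is routine. Since $\P_\kappa\subseteq V_\kappa$ is an Easton-support iteration of ${<}\kappa$-sized pieces it is $\kappa$-c.c. Given $A\subseteq\kappa$ in $V[G_\kappa]$ with a nice name $\dot A$, I would build in $V$ a $\kappa$-model $M$ containing $\dot A,\P_\kappa$ and sufficient parameters and an embedding $j:M\to N$ with critical point $\kappa$; in $N$, $j(\P_\kappa)$ factors as $\P_\kappa*\dot\P_{[\kappa,j(\kappa))}$ with tail $\kappa$-strategically closed over $N[G_\kappa]$. As $\P_\kappa$ is $\kappa$-c.c., $N[G_\kappa]$ is ${<}\kappa$-closed in $V[G_\kappa]$ (Lemma \ref{lemma_generic_closure}), so Lemma \ref{lemma_diagonalization_criterion} yields a generic $\hat G$ for the tail over $N[G_\kappa]$ lying in $V[G_\kappa]$; since conditions in $\P_\kappa$ have bounded support, $j[G_\kappa]\subseteq G_\kappa*\hat G$ and $j$ lifts (Lemma \ref{lemma_lifting_criterion}) to $j:M[G_\kappa]\to N[G_\kappa*\hat G]$, witnessing weak compactness of $\kappa$ in $V[G_\kappa]$.

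For the reflection clause, let $S$ be weakly compact in $V[G_\kappa]$ with name $\dot S\in H_{\kappa^+}$ and $p\in G_\kappa$ forcing $\dot S\in\Pi^1_1(\kappa)^+$. Set $B=\{\alpha<\kappa: \exists q\leq p\ (q\Vdash\alpha\in\dot S)\}\in V$; then $p\Vdash\dot S\subseteq B$, and exactly as in the proof of Theorem \ref{theorem_weakly_compact_ideal_after_forcing}, $B$ must be weakly compact in $V$ (otherwise $\dot S$ would be forced into $\overline{\check\Pi^1_1(\kappa)}=\dot\Pi^1_1(\kappa)$, contrary to the choice of $p$). Since $\Refl_{\wc}(\kappa)$ holds in $V$, the weakly compact subsets of $\kappa$ are closed under $\Tr_{\wc}$, so $\Tr^V_{\wc}(B)$ is weakly compact in $V$; by Theorem \ref{theorem_weakly_compact_ideal_after_forcing} it stays weakly compact, hence nonempty, in $V[G_\kappa]$. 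The crux is then to show that for a suitable $\gamma\in\Tr^V_{\wc}(B)$ — in fact for a set of such $\gamma$ that is weakly compact in $V[G_\kappa]$ — the set $S\cap\gamma$ is weakly compact in $\gamma$ in $V[G_\kappa]$; note that $\P_{(\gamma,\kappa)}$ adds no subsets of $\gamma$, so this is decided by $G_{\gamma+1}$, and $\gamma$ remains weakly compact below $\kappa$. Fixing such a $\gamma$, use that $B\cap\gamma$ is weakly compact in $\gamma$ in $V$ to pick a $\gamma$-model $M_0$ containing $B\cap\gamma$, $\P_{\gamma+1}$, a $\P_{\gamma+1}$-name $\dot S_\gamma$ for $S\cap\gamma$, the names $\dot S^\gamma_n$ and sufficient parameters, and an embedding $i:M_0\to N_0$ of critical point $\gamma$ with $\gamma\in i(B\cap\gamma)$; choosing $i$ with $i(\gamma)$ as small as possible will control the target. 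Work in $V[G_{\gamma+1}]$, lift $i$ over $\P_\gamma$ (using $G_\gamma$ for the initial block and diagonalizing along the highly closed tail of $i(\P_\gamma)$ over $N_0[G_\gamma]$), and then over $\dot\Q_\gamma$. This last step is the delicate one: $i$ fixes every condition of $\dot\Q_\gamma$ (they lie in $V_\gamma$), so lifting requires a single master condition below $i[H_\gamma]=H_\gamma$ in $i(\dot\Q_\gamma)=\bigoplus_m\prod_{n\in[m,\omega)}T^1(i(\gamma)\setminus i(\dot S^\gamma_n))$ at stage $i(\gamma)$. If $m_0$ is the coordinate picked by $H_\gamma$ and $g_n$ ($n\geq m_0$) are the $1$-clubs that $H_\gamma$ shoots through $\gamma\setminus S^\gamma_n$, the candidate is $c^\ast=\big(m_0,\langle g_n\cup\{\gamma\}:n\geq m_0\rangle\big)$. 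Here the lottery of \emph{products} is essential (cf. Remark \ref{remark_intuitive_comment}): one condition must absorb \emph{all} of the $g_n$, $n\geq m_0$, and $g_n\cup\{\gamma\}$ is $1$-closed and a condition of $T^1(i(\gamma)\setminus i(\dot S^\gamma_n))$ precisely because $\gamma\notin i(\dot S^\gamma_n)$ for every $n\geq m_0$ — which is arranged via the choice of $\gamma$ and $i$, using that $H_\gamma$ has already destroyed the relevant orders of weak compactness of $\gamma$. Diagonalizing through $c^\ast$ (Lemma \ref{lemma_diagonalization_criterion}, using $\kappa$-strategic closure of the $T^1$-forcings, Lemma \ref{lemma_1club_shooting_str_closed}) to a generic for $i(\dot\Q_\gamma)$ over $N_0[\hat G_{i(\gamma)}]$, one extends $i$ to $\hat i:M_0[G_{\gamma+1}]\to N_0[\hat G_{i(\gamma)+1}]$; arranging $c^\ast$ (or the generic) also to put $\gamma$ into $\hat i(\dot S_\gamma)$ — possible since $\gamma\in i(B\cap\gamma)$ bounds $\hat i(S\cap\gamma)$ — witnesses that $S\cap\gamma=\hat i(S\cap\gamma)\cap\gamma$ is weakly compact in $\gamma$ in $M_0[G_{\gamma+1}]$, hence in $V[G_{\gamma+1}]$ and so in $V[G_\kappa]$.

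To pass from ``many good $\gamma$'' to the clean statement, I would feed the resulting sequence of conditions into the Gitik--Shelah Lemma \ref{lemma_gitik_shelah} with $\lambda=\kappa$, $\Q=\P_\kappa$, $I=\Pi^1_1(\kappa)^V$, and invoke Theorem \ref{theorem_weakly_compact_ideal_after_forcing} (which identifies $\overline{\Pi^1_1(\kappa)^V}$ with the weakly compact ideal of the extension): this shows $p$ forces the set of reflection points of $\dot S$ to be $\Pi^1_1(\kappa)$-positive, so $\Tr_{\wc}(S)$ is weakly compact, in particular nonempty, in $V[G_\kappa]$. Hence $\Refl_{\wc}(\kappa)$ holds there. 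The main obstacle, I expect, is exactly the construction of the master condition $c^\ast$: ensuring that a reflection point $\gamma$ of $B$ and an embedding $i$ can be chosen so that $\gamma\notin i(\dot S^\gamma_n)$ for every $n$ in the winning final segment. This is the feature that forces the iteration to use lottery sums of products of $1$-club shootings rather than of single ones, and it is presumably isolated as a separate lemma in the paper.
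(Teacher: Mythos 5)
Your overall architecture is the right one (cover $\dot S$ by a ground-model weakly compact set, reflect it using $\Refl_{\wc}(\kappa)$ in $V$, lift an embedding at the reflection point $\gamma$ through $\P_{\gamma+1}$ with a master condition for the stage-$\gamma$ lottery of products, and use Theorem \ref{theorem_weakly_compact_ideal_after_forcing} together with Gitik--Shelah), and you correctly identify that the lottery of \emph{products} is there so that a single master condition can absorb all of the $1$-clubs $g_n$ at once. But the master condition you propose does not exist, and the mechanism that makes the paper's version legal is exactly the piece you are missing. You take $c^\ast=(m_0,\langle g_n\cup\{\gamma\}\st n\geq m_0\rangle)$ at the \emph{same} lottery coordinate $m_0$ that the real generic $H_\gamma$ opted for. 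For $g_{m_0}\cup\{\gamma\}$ to be a condition in $T^1(i(\gamma)\setminus i(\dot S^\gamma_{m_0}))$ you need $\gamma\notin i(\dot S^\gamma_{m_0})$, i.e.\ $\gamma$ must fail to be $m_0$-weakly compact (Mahlo, if $m_0=0$) in $N_0$'s extension. Your justification --- that ``$H_\gamma$ has already destroyed the relevant orders of weak compactness of $\gamma$'' --- is off by one: shooting $1$-clubs through $\gamma\setminus S^\gamma_n$ for $n\geq m_0$ kills the weak compactness of the sets $S^\gamma_n$ and hence the $(m_0+1)$-weak compactness of $\gamma$, but it does \emph{not} touch the $m_0$-weak compactness of $\gamma$; indeed the whole point of the construction is that $\gamma$ itself stays weakly compact. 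When $m_0=0$ the requirement is that $\gamma$ not be Mahlo in $N_0$'s extension, which is simply false. The paper's fix is a coordinate shift: starting from the condition $r'$ with $r'(\gamma)=(m,\dot f)$ witnessing $\gamma\in j(B)$, one forces the \emph{real} generic below $(m+1,\dot f\restrict[m+1,\omega))$, while on the image side one builds a generic opting for coordinate $m$, namely $\{m\}\times J_\gamma\times I_\gamma$ where $J_\gamma$ is an auxiliary diagonally-constructed generic for $T^1(\gamma\setminus S^\gamma_m)$ containing $f(m)$. The $1$-club $\bigcup J_\gamma$ kills the $(m+1)$-weak compactness of $\gamma$ inside $N[\hat G_{j(\gamma)}]$, which is precisely what makes $\langle C^\gamma_n\cup\{\gamma\}\st n\geq m+1\rangle$ a legal master condition on the remaining coordinates, and simultaneously puts $r'(\gamma)$ into the image generic so that $r'\in\hat G_{j(\gamma)}$ and hence $\gamma\in j(\dot S)$. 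Your step ``arranging $c^\ast$ also to put $\gamma$ into $\hat i(\dot S_\gamma)$, possible since $\gamma\in i(B\cap\gamma)$'' hides the same issue: $\gamma\in i(B)$ only supplies a condition forcing $\gamma\in i(\dot S)$, and that condition's stage-$\gamma$ component must be threaded into the image-side generic, which is what forces the two sides to use \emph{different} lottery coordinates.

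Two smaller structural points. First, a single lifted embedding for one $\gamma$-model $M_0$ does not show that $S\cap\gamma$ is weakly compact in the extension; you need to quantify over all $A\subseteq\gamma$ of $V[G_{\gamma+1}]$, which the paper does by the fullness/$(\ast)$ contradiction inside Lemma \ref{lemma_hiroshi}. Second, the essential application of Lemma \ref{lemma_gitik_shelah} is at the reflection point $\gamma$ (with $\lambda=\gamma$, $\Q=\P_\gamma$, $I=\Pi^1_1(\gamma)^V$, and Theorem \ref{theorem_weakly_compact_ideal_after_forcing} applied at $\gamma$): that is how one gets a single condition $p_\alpha$ forcing ``$\{\beta\in T\cap\gamma\st p_\beta\in\dot G_\gamma\}$ is a weakly compact subset of $\gamma$ in $V^{\P_\gamma}$'', which is the input to Lemma \ref{lemma_hiroshi}. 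Your proposed application at $\kappa$ at the end is not needed for the density argument and, more importantly, does not substitute for the application at $\gamma$.
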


\noindent
Here note that the $\omega$-weakly compactness of $\kappa$ in $V^{\P_\kappa}$ follows from $\Refl_{\wc}(\kappa)$.

First we prove Lemma \ref{lemma_no_omega_wc}.

\begin{proof}[Proof of Lemma \ref{lemma_no_omega_wc}]
Suppose that $G_\kappa\subseteq\P_\kappa$ is a filter generic over $V$ and that $\gamma < \kappa$. Working in $V[ G_\kappa ]$, we prove that $\gamma$ is not $\omega$-weakly compact. Note that if $\gamma$ is not Mahlo in $V$, then it is not Mahlo in $V[ G_\kappa ]$. So we assume that $\gamma$ is Mahlo in $V$.

Let $G_\gamma := G_\kappa \cap \P_\gamma$ and $H_\gamma$ be a $(\dot{\Q}_\gamma)_{G_\gamma}$-generic filter over $V[ G_\gamma ]$ which is naturally obtained from $G_\kappa$. Then $H_\gamma = \{ m \} \times I_\gamma$ for some $m < \omega$, where $I_\gamma$ is a $\prod_{n\in [m,\omega)} T^1 ( \gamma \setminus S^\gamma_n )$-generic filter
over $V[ G_\gamma ]$. Let $J_\gamma := \{ f(m) \st f \in I_\gamma \}$ and $C := \bigcup J_\gamma$.

Then $C \cap (S^\gamma_m)^{V[ G_\gamma ]} = \emptyset$.
But note that $(S^\gamma_n)^{V[ G_\kappa ]} = (S^\gamma_n)^{V[ G_\gamma ]}$ for all $n < \omega$, which can be easily proved by induction on $n$ using the fact that $V[ G_\gamma ]^{< \gamma} \cap V[ G_\kappa ] \subseteq V[ G_\gamma ]$. Thus $C \cap (S^\gamma_m)^{V[ G_\kappa ]} = \emptyset$. So it suffices to prove that $C$ is $1$-club in $\gamma$ in $V[ G_\kappa ]$. It easily follows from the definition of $T^1 (\gamma\setminus S^\gamma_m)$ that $C$ is $1$-closed in $\gamma$. To see that $C$ is stationary in $\gamma$ in $V[ G_\kappa ]$, first notice that $C$ is stationary in $\gamma$ in $V[ G_\gamma * I_\gamma ]$. This can be proved by a standard density argument. Then, since $V[ G_\gamma * I_\gamma ]^\gamma \cap V[ G_\kappa ] \subseteq V[ G_\gamma * I_\gamma ]$, we have that $C$ is stationary in $\gamma$ in $V[ G_\kappa ]$.
\end{proof}

In order to prove Lemma \ref{lemma_refl_P_kappa} we will need the following.

\begin{lemma}\label{lemma_hiroshi}
Suppose that $\gamma<\kappa$, $p\in\P_\gamma$ and $\dot{S}$ is a $\P_\gamma$-name for a subset of $\gamma$. Suppose that 
\[p\forces_{\P_\gamma} \text{``$\dot{S}$ is a weakly compact subset of $\gamma$''.}\]
Then there is $r\leq p$ with $r\in\P_{\gamma+1}$ such that
\[r\forces_{\P_{\gamma+1}} \text{``$\dot{S}$ is a weakly compact subset of $\gamma$''};\]
in other words, $p$ does not force $\dot{S}$ to be a non-weakly compact subset of $\gamma$ in $V^{\P_{\gamma+1}}$.
\end{lemma}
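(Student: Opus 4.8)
The plan is to use a master-condition argument that mirrors the proof of Theorem~\ref{theorem_1_club_shooting} but is adapted to our iteration. First I would fix, in $V^{\P_\gamma}$, an elementary embedding witnessing the weak compactness of $\dot S_{G_\gamma}$: working below $p$, choose $G_\gamma\subseteq\P_\gamma$ generic with $p\in G_\gamma$, write $S=\dot S_{G_\gamma}$, and by Definition~\ref{definition_weak_compactness} obtain a $\gamma$-model $M$ in $V[G_\gamma]$ containing $S$, $\P_\gamma$, $\dot{\Q}_\gamma$, $G_\gamma$ and an elementary embedding $j\colon M\to N$ with critical point $\gamma$, $N$ a $\gamma$-model, and $\gamma\in j(S)$. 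Since conditions in $\P_\gamma$ have support bounded below $\gamma=\crit(j)$, we have $j[G_\gamma]\subseteq G_\gamma$ regarded as a subset of $j(\P_\gamma)$ below stage $\gamma$, so by Lemma~\ref{lemma_lifting_criterion} it remains to build an $N[G_\gamma]$-generic for the tail $j(\P_\gamma)_{[\gamma,j(\gamma))}$; by Lemma~\ref{lemma_generic_closure} $N[G_\gamma]$ is still closed under ${<}\gamma$-sequences in $V[G_\gamma]$, and the tail is $\gamma$-strategically closed there, so Lemma~\ref{lemma_diagonalization_criterion} supplies such a generic and lifts the embedding to $j\colon M[G_\gamma]\to N[\hat G_{j(\gamma)}]$.

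The crux is the next stage. In $N[\hat G_{j(\gamma)}]$ the poset $j(\dot{\Q}_\gamma)$ is the lottery sum $\bigoplus_{m<\omega}\prod_{n\in[m,\omega)}T^1(j(\gamma)\setminus j(S^\gamma_n))$. Since $\gamma$ is Mahlo in $V$ and $\P_\gamma$ is $\gamma$-c.c., $\gamma$ is Mahlo in $V[G_\gamma]$, so we may force with $\dot{\Q}_\gamma$ below a condition opting for the factor $m=0$, giving an $N[G_\gamma]$-trivial passage and, via $j$, a condition in $j(\dot{\Q}_\gamma)$ that opts for $m=0$ as well; more precisely, I would argue that the $0$-th factor forcing $T^1(j(\gamma)\setminus j(S^\gamma_0))$ admits the master condition $c=C_0\cup\{\gamma\}$, where $C_0=\bigcup\{f(0)\st f\in H_\gamma\}$ comes from the generic $H_\gamma$ for $\dot{\Q}_\gamma$ chosen to opt for $m=0$. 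The key point is that $\gamma$ is Mahlo in $N[\hat G_{j(\gamma)}]$ but $\gamma$ is \emph{not} weakly compact there --- this is exactly the phenomenon isolated in the commented-out Lemma~\ref{lemma_not_weakly_compact_in_N}, obtained by choosing $j$ with $j(\gamma)$ minimal --- so $\gamma\notin j(S^\gamma_0)$ and hence $\gamma\in j(\gamma)\setminus j(S^\gamma_0)$, making $c$ a legitimate $1$-closed condition extending every element of $j[H_\gamma]$. Here the choice of lottery sum \emph{of products} matters: opting for $m=0$ means we must simultaneously realize the master condition in \emph{all} factors $T^1(j(\gamma)\setminus j(S^\gamma_n))$, $n<\omega$, and $\gamma\notin j(S^\gamma_n)$ for every $n$ precisely because $\gamma$ is not weakly compact (indeed not $1$-weakly compact) in $N[\hat G_{j(\gamma)}]$; with the simpler iteration of Remark~\ref{remark_intuitive_comment} one could only place $\gamma$ into one such set. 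I then use Lemma~\ref{lemma_diagonalization_criterion} again, in $V[G_\gamma*H_\gamma]$, to extend through $j(\dot{\Q}_\gamma)$ with $c$ in the generic, obtaining $j\colon M[G_\gamma*H_\gamma]\to N[\hat G_{j(\gamma)}*\hat H]$, and since $\gamma\in j(S)$ this lifted embedding witnesses that $S$ is a weakly compact subset of $\gamma$ in $V[G_\gamma*H_\gamma]=V^{\P_{\gamma+1}}$ below the condition $p$ together with a name for the $m=0$ lottery choice.

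Finally I would extract the statement of the lemma from this generic construction by the usual maximality/reflection argument: the set of conditions $r\le p$ in $\P_{\gamma+1}$ forcing ``$\dot S$ is weakly compact'' is, by the above, dense below $p$ in the quotient $\P_{\gamma+1}/G_\gamma$ for every generic $G_\gamma\ni p$, and since this holds for a dense set of $G_\gamma$ one obtains a single $r\le p$ in $\P_{\gamma+1}$ with $r\forces_{\P_{\gamma+1}}$``$\dot S$ is a weakly compact subset of $\gamma$''. I expect the main obstacle to be verifying that $\gamma$ fails to be weakly compact --- equivalently $1$-weakly compact --- in the relevant target model $N[\hat G_{j(\gamma)}]$, so that $\gamma$ lies outside every $j(S^\gamma_n)$; this is where one must invoke the minimality of $j(\gamma)$ and an argument in the style of the bracketed Lemma~\ref{lemma_not_weakly_compact_in_N}, and one must also check that neither the tail forcing $j(\P_\gamma)_{[\gamma,j(\gamma))}$ nor $\dot{\Q}_\gamma$ itself resurrects the weak compactness of $\gamma$ over $N$, which follows since $\P_\gamma$ is $\gamma$-c.c.\ (hence does not make $\gamma$ weakly compact by Lemma~\ref{lemma_weakly_compact_ideal_after_kappa_cc}) and $\dot{\Q}_\gamma$ together with the tail are sufficiently strategically closed.
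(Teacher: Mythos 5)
There is a genuine gap at the crux of your argument: the master condition you propose at stage $j(\gamma)$ does not exist. You opt for the lottery factor $m=0$, so your candidate condition must place $\gamma$ into a bounded $1$-closed subset of $j(\gamma)\setminus j(S^\gamma_n)$ for \emph{every} $n<\omega$, in particular for $n=0$. But $S^\gamma_0$ is the set of \emph{Mahlo} cardinals below $\gamma$, and, as you yourself note, $\gamma$ remains Mahlo in $N[\hat G_{j(\gamma)}]$; hence $\gamma\in j(S^\gamma_0)$ and $c=C_0\cup\{\gamma\}$ is not a condition in $T^1\bigl(j(\gamma)\setminus j(S^\gamma_0)\bigr)$. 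No appeal to the minimality of $j(\gamma)$ or to the failure of weak compactness of $\gamma$ in the target model can repair this, since that could at best give $\gamma\notin j(S^\gamma_n)$ for $n\geq 1$. Even for $n\geq 1$ the step is shaky as written: you need $\gamma$ to be non-weakly-compact in the forcing extension $N[\hat G_{j(\gamma)}]$, whereas the bracketed lemma you invoke only arranges something of this kind in the ground-model target $N$, and for a set rather than for $\gamma$ itself.

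The paper's proof supplies exactly the missing mechanism, and this is the real reason for the lottery sum \emph{of products}. One arranges that the condition $r'\leq j(p)$ forcing $\gamma\in j(\dot S)$ opts at stage $\gamma$ for the product indexed by $[m,\omega)$, while the condition $r$ actually forced with in $V$ opts for the product indexed by $[m+1,\omega)$. On the $N$-side one builds by diagonalization an extra generic $J_\gamma$ for the single leftover factor $T^1(\gamma\setminus S^\gamma_m)$; the resulting $1$-club $\bigcup J_\gamma$ is disjoint from $S^\gamma_m$, so $\gamma$ fails to be $(m+1)$-weakly compact in $N[\hat G_{j(\gamma)}]$, whence $\gamma\notin j(S^\gamma_n)$ for all $n\geq m+1$ and the master condition $(m+1,\langle C^\gamma_n\cup\{\gamma\}\st n\in[m+1,\omega)\rangle)$ is legitimate. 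Your version, with the same lottery option on both sides, has no analogous device. A secondary issue is that you choose the embedding in $V[G_\gamma]$ rather than in $V$; the paper instead passes to the set $B=\{\alpha\st\exists q\leq p\ (q\forces\alpha\in\dot S)\}$, pulls its weak compactness back to $V$ via Lemma~\ref{lemma_approximation_and_cover}, and runs the whole argument by contradiction against a fullness witness, which is what makes the final extraction of a single $r\leq p$ precise---in your write-up that last step is only sketched.
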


\begin{proof}
%First we introduce some notation and make some simplifying assumptions. For $\gamma<\kappa$ and $m<\omega$ we let $\dot{T}^\gamma_m$ be a $\P_\gamma$-name such that
%\[\forced_{\P_\gamma}\text{``$\dot{T}^\gamma_m=\{\beta<\gamma\st(\beta,(m,1_m))\in\dot{G}_\gamma\}$''}\]
%where $\dot{G}_\gamma$ is the canonical $\P_\gamma$ name for the $\P_\gamma$-generic filter over $V$. Thus, $\dot{T}^\gamma_m$ is a $\P_\gamma$-name for the set of all $\beta<\gamma$ such that the product $\prod_{n\in[m,\omega)}T^1(\beta\setminus S^\beta_n)$ was opted for at the $\beta$-th stage of the iteration.
%
%Notice that in $V^{\P_\gamma}$, the set $(S^\gamma_0)^V$ belongs to the weakly compact filter at $\gamma$ and $\<\dot{T}^\gamma_m\st m<\omega\>$ is a countable partition of $(S^\gamma_0)^V$. Hence by extending $p$ if necessary, we can take $m<\omega$ such that $p\forces_{\P_\gamma}$ ``$\dot{S}\cap\dot{T}^\gamma_m$ is a weakly compact subset of $\gamma$''. Then, by replacing $\dot{S}$ with a $\P_\gamma$-name for $\dot{S}\cap\dot{T}^\gamma_m$, we may assume that
%\[p\forces_{\P_\gamma}\text{``$\dot{S}\subseteq\dot{T}^\gamma_m\land\dot{E}$ is a weakly compact subset of $\gamma$''}.\]

For the sake of contradiction, assume that
\[p\forces_{\P_{\gamma+1}} \text{``$\dot{S}$ is not a weakly compact subset of $\gamma$''}.\]
By the fullness principle there is a $\P_{\gamma+1}$-name $\dot{A}\in H_{\gamma^+}$ for a subset of $\gamma$ such that
\begin{align*}
p\forces_{\P_{\gamma+1}} & \text{``for every $\gamma$-model $M$ with $\gamma,\dot{A},\dot{S}\in M$ and for every}\tag{$*$}\\
 & \text{elementary embedding $j:M\to N$ where $N$ is a $\gamma$-model}\\
 & \text{we have $\gamma\notin j(\dot{S})$''}
\end{align*}
We will contradict $(*)$ by finding a filter $G_{\gamma+1}\subseteq\P_{\gamma+1}$ containing $p$ such that in $V[G_{\gamma+1}]$ there are $\gamma$-models $M^*,N^*$ with $\gamma,\dot{A}_{G_{\gamma+1}},\dot{S}_{G_{\gamma+1}}\in M^*$ and an elementary embedding $j:M^*\to N^*$ with critical point $\gamma$ such that $\gamma\in j(\dot{S}_{G_{\gamma+1}})$.

Let $B=\{\alpha<\kappa\st\exists q\in\P_\gamma\ (q\leq p)\land (q\forces_{\P_\gamma} \alpha\in \dot{S})\}$. Since $p\forces_{\P_\gamma}$ ``$\dot{S}\subseteq B$'', it follows that $p\forces_{\P_\gamma}$ ``$B$ is weakly compact''. By Lemma \ref{lemma_approximation_and_cover}, $B$ is a weakly compact subset of $\gamma$ in $V$. Let $M$ be a $\gamma$-model with $p,\P_{\gamma+1},\dot{S},\dot{A},B \in M$ and let $j:M\to N$ be an elementary embedding with critical point $\gamma$ such that $\gamma\in j(B)$ where $N$ is a $\gamma$-model.

By elementarity, $j(\<\P_\alpha,\dot{\Q}_\beta\st \alpha\leq\gamma+1,\beta\leq\gamma\>)$ is an Easton support iteration of length $j(\gamma)+1$, which we denote by $\<\P'_\alpha,\dot{\Q}'_\alpha\st\alpha\leq j(\gamma)+1,\beta\leq j(\gamma)\>$. Since $N$ is a $\gamma$-model we have $\P_{\gamma+1}=\P'_{\gamma+1}$. Since $\gamma\in j(B)$, from the definition of $B$ we see that there is a condition $r'\leq j(p)=p$ in $j(\P_\gamma)=\P'_{j(\gamma)}$ such that $r'\forces_{j(\P_\gamma)}^N$ $\gamma\in j(\dot{S})$. Here note that $\gamma$ is Mahlo in $N$. So, by extending $r'$ if necessary, we may assume that $r'\restrict\gamma\forces_{\P_\gamma}^N$ $r'(\gamma)=(m,\dot{f})$ for some $m < \omega$ and some $\dot{f}\in(\prod_{n\in[m,\omega)}T^1(\gamma\setminus S^\gamma_n))^{V^{\P_\gamma}}$. Define a condition $r\in \P_\gamma*\dot{\Q}_\gamma$ by letting $r=r'\restrict\gamma\cup\{(\gamma,(m+1,\dot{f}\restrict[m+1,\omega))\}$. Then $r$ opts for the product $(\prod_{n\in[m+1,\omega)}T^1(\gamma\setminus S^\gamma_n))^{V^{\P_\gamma}}$ in the stage $\gamma$ lottery sum.

Let $G_{\gamma+1}\cong G_\gamma*H_\gamma$ be a filter for $\P_{\gamma+1} = \P_\gamma*\dot{\Q}_\gamma$ generic over $V$ with $r\in G_\gamma * H_\gamma$. Notice that $p\in G_{\gamma+1}$ since $r'\restrict\gamma\leq p$. Let $A$, $S$ and $f$ be the evaluations of $\dot{A}$, $\dot{S}$ and $\dot{f}$ by $G_{\gamma+1}$, respectively. Then $A,S \in M[G_{\gamma+1}]$. Notice also that $M[G_{\gamma+1}]$ is a $\gamma$-model in $V[G_{\gamma+1}]$ by Lemma \ref{lemma_generic_closure} and the fact that $(\dot{\Q}_\gamma)_{G_\gamma}$ is $\gamma$-strategically closed. Thus, to contradict $(*)$, it suffices to prove that in $V[G_{\gamma+1}]$, $j$ can be extended to an elementary embedding $j:M[G_{\gamma+1}] \to N^*$ with $\gamma\in j(S)$ for some $\gamma$-model $N^*$. $N^*$ will be an extension of $N$ by $\P_{j(\gamma)}'$. We will work in $V[ G_{\gamma+1} ]$ below.

We have $H_\gamma=\{m+1\}\times I_\gamma$ where $I_\gamma$ is a generic filter for $\prod_{n\in[m+1,\omega)}T^1(\gamma\setminus S^\gamma_n)$ over $V[G_\gamma]$. We define a filter $H_\gamma'\subseteq\Q_\gamma'=(\dot{\Q}_\gamma')_{G_\gamma}$ generic over $N[G_\gamma]$ as follows. By Lemma \ref{lemma_generic_closure}, $N[G_\gamma]^{<\gamma}\cap V[G_\gamma]\subseteq N[G_\gamma]$ and moreover $V[G_\gamma]$ believes that $N[G_\gamma]$ is a $\gamma$-model. Since $T^1(\gamma\setminus S^\gamma_m)$ is $\gamma$-strategically closed in $N[G_\gamma]$ it follows from Lemma \ref{lemma_diagonalization_criterion} that there is a filter $J_\gamma\in V[G_\gamma]$ which is generic for $T^1(\gamma\setminus S^\gamma_m)$ over $N[G_\gamma]$ such that $f(m)\in J_\gamma$. Thus $H_\gamma':=\{m\}\times J_\gamma\times I_\gamma$ is generic for $\Q_\gamma'$ over $N[G_\gamma]$ and $r'(\gamma)=(m,f)\in H_\gamma'$. Let $\hat{G}_{\gamma+1} := G_\gamma * H_\gamma'$.

Since $\Q_\gamma$ is $\gamma$-strategically closed, we have $N[G_\gamma]^{<\gamma}\cap V[G_{\gamma+1}]\subseteq N[G_\gamma]$, and since $H_\gamma ' \in V[G_{\gamma+1}]$, Lemma \ref{lemma_ground_closure} implies that $N[\hat{G}_{\gamma+1}]^{<\gamma}\cap V[G_{\gamma+1}]\subseteq N[\hat{G}_{\gamma+1}]$. Since $N[\hat{G}_{\gamma+1}]$ is a $\gamma$-model in $V[G_{\gamma+1}]$ and $\P'_{j(\gamma)} / \hat{G}_{\gamma+1}$ is $\gamma$-strategically closed in $N[\hat{G}_{\gamma+1}]$ we can build a filter $K\in V[G_{\gamma+1}]$ which is generic for $\P'_{j(\gamma)} / \hat{G}_{\gamma+1}$ over $N[\hat{G}_{\gamma+1}]$ such that $r'/\hat{G}_{\gamma+1} \in K$. We let $\hat{G}_{j(\gamma)}=\hat{G}_{\gamma+1}*K$. Since conditions in $\P_\gamma$ have support bounded below the critical point of $j$ we have $j[G_\gamma]\subseteq\hat{G}_{j(\gamma)}$ and thus $j$ extends to $j:M[G_\gamma]\to N[\hat{G}_{j(\gamma)}]$. By Lemma \ref{lemma_ground_closure} we have $N[\hat{G}_{j(\gamma)}]^{<\gamma}\cap V[G_{\gamma+1}]\subseteq N[\hat{G}_{j(\gamma)}]$.

Recall that $H_\gamma$ is a filter for $\Q_\gamma$ which is generic over $V[G_\gamma]$ with $r(\gamma)=(m+1,f\restrict[m+1,\omega))\in H_\gamma$ and $H_\gamma=\{m+1\}\times I_\gamma$ where $I_\gamma\subseteq\prod_{n\in[m+1,\omega)}T^1(\gamma\setminus S^\gamma_n )$. For each $n\in[m+1,\omega)$ let $C^\gamma_n=\bigcup_{h\in I_\gamma} h(n)$ and note that each $C^\gamma_n$ is in $N[\hat{G}_{j(\gamma)}]$. Moreover, by the same argument as in the proof of Lemma \ref{lemma_no_omega_wc}, $\bigcup J_\gamma$ is a $1$-club subset of $\gamma$ in $N[\hat{G}_{j(\gamma)}]$ which is disjoint from $(S^\gamma_m)^{N[\hat{G}_{j(\gamma)}]}$. Then it follows that $\gamma$ is not $(m+1)$-weakly compact in $N[\hat{G}_{j(\gamma)}]$. This implies that $F=\<C^\gamma_n\cup\{\gamma\}\st n\in[m+1,\omega)\>$ is a condition in $j(\prod_{n\in[m+1,\omega)}T^1(\gamma\setminus S^\gamma_n))$, and thus $(m+1,F)$ is a condition in $\Q'_{j(\gamma)} = (\dot{\Q}'_{j(\gamma)})_{\hat{G}_{j(\gamma)}}$ below every element of $j[H_\gamma]$. Since $N[\hat{G}_{j(\gamma)}]$ is a $\gamma$-model in $V[G_{\gamma+1}]$ it follows from Lemma \ref{lemma_diagonalization_criterion} that working in $V[G_{\gamma+1}]$, we can build a filter $\hat{H}_{j(\gamma)}$ for $\Q'_{j(\gamma)}$ generic over $N[\hat{G}_{j(\gamma)}]$ with $(m+1,F)\in \hat{H}_{j(\gamma)}$. Let $\hat{G}_{j(\gamma)+1} := \hat{G}_{j(\gamma)} * \hat{H}_{j(\gamma)}$. Since $j[H_\gamma]\subseteq\hat{H}_{j(\gamma)}$ we see that $j$ extends to $j:M[G_{\gamma+1}]\to N[\hat{G}_{j(\gamma)+1}]$. Since $\hat{H}_{j(\gamma)}\in V[G_{\gamma+1}]$ it follows from Lemma \ref{lemma_ground_closure} that $N[\hat{G}_{j(\gamma)+1}]$ is a $\gamma$-model in $V[G_{\gamma+1}]$. Since $r'\in \hat{G}_{j(\gamma)}$ we have $\gamma\in j(S)$. This contradicts $(*)$.
\end{proof}

\begin{proof}[Proof of Lemma \ref{lemma_refl_P_kappa}]
First note that $\kappa$ is weakly compact in $V^{\P_\kappa}$ by Theorem \ref{theorem_weakly_compact_ideal_after_forcing}. Hence it suffices to prove that in $V^{\P_\kappa}$ every weakly compact subset of $\kappa$ has a weakly compact proper initial segment.

Suppose $p\forces_{\P_\kappa} \text{``$\dot{S}$ is a weakly compact subset of $\kappa$''}$. We will show that the set of conditions below $p$ forcing that $\dot{S}$ has a weakly compact proper initial segment is dense below $p$. Let $p'\leq p$ be any extension of $p$. Let $T$ be the set of all $\alpha < \kappa$ such that there is $p'' \leq p'$ with $p''\forces_{\P_\kappa} \alpha \in \dot{S}$. For each $\alpha\in T$ fix a condition $p_\alpha\leq p'$ with $p_\alpha \forces_{\P_\kappa} \alpha \in \dot{S}$. Since $p\forces_{\P_\kappa} \dot{S} \subseteq T$, it follows that $T$ is weakly compact in $V$. Applying the fact that $\Refl_{\wc}(\kappa)$ holds in $V$, let $\gamma<\kappa$ be such that $T\cap\gamma$ is a weakly compact subset of $\gamma$, and $p_\alpha \in \P_\gamma$ for all $\alpha \in T \cap \gamma$. By Lemma \ref{lemma_gitik_shelah} and Theorem \ref{theorem_weakly_compact_ideal_after_forcing}, there is $\alpha\in T \cap \gamma$ such that
\[p_\alpha\forces_{\P_\gamma} \text{``$\{\beta\in T \cap \gamma \st p_\beta\in \dot{G}_\gamma\}$ is a weakly compact subset of $\gamma$''}.\]
where $\dot{G}_\gamma$ is the canonical name for a $\P_\gamma$-generic filter.
By Lemma \ref{lemma_hiroshi}, there is a condition $r\in\P_{\gamma+1}$ with $r\leq p_\alpha$ such that
\[r\forces_{\P_{\gamma+1}} \text{``$\{\beta\in T \cap \gamma \st p_\beta\in \dot{G}_\gamma\}$ is a weakly compact subset of $\gamma$''}.\]
Let us show that $r\forces_{\P_\kappa}$ ``$\dot{S} \cap \gamma$ is a weakly compact subset of $\gamma$''. Suppose $G \subseteq \P_\kappa$ is any generic filter over $V$ with $r\in G$. Let $G_\gamma := G \cap \P_\gamma$. Then $\{\beta\in T \cap \gamma \st p_\beta\in G_\gamma\}$ is a weakly compact subset of $\gamma$ in $V[G_{\gamma+1}]$, and thus also in $V[G]$ since $V[ G_\gamma ]^\gamma \cap V[G] \subseteq V[ G_\gamma ]$. But $\{ \beta \in T \cap \gamma \st p_\beta \in G_\gamma \} \subseteq \dot{S}_G \cap \gamma$. Hence $\dot{S}_G \cap \gamma$ is a weakly compact subset of $\gamma$ in $V[G]$.
\end{proof}

This completes the proof of Theorem \ref{theorem_main_theorem}.
\end{proof}

\section{Question}\label{section_questions}

Let us summarize some open questions regarding the $\Pi^1_n$-reflection principles. In this article we prove that the $\Pi^1_1$-reflection principle can hold at the least $\omega$-$\Pi^1_1$-indescribable cardinal, and thus the $\Pi^1_1$-reflection principle $\Refl_{\wc}(\kappa)$ does not imply that $\kappa$ is $(\omega+1)$-$\Pi^1_1$-indescribable.

\begin{question}
For $n>1$, can the $\Pi^1_n$-reflection principle hold at the least $\omega$-$\Pi^1_n$-indescribable cardinal? (For $n=0$ the answer is easily seen to be yes from Mekler and Shelah \cite{MR1029909}.)
\end{question}

If $\xi<\kappa^+$ is any fixed ordinal, the first author proved that if $\kappa$ is $\xi$-$\Pi^1_1$-indescribable and the $\Pi^1_1$-reflection principle $\Refl_{\wc}(\kappa)$ holds, then there is a forcing extension in which $\kappa$ gains a non-reflecting $\Pi^1_1$-indescribable set and $\kappa$ remains $\xi$-$\Pi^1_1$-indescribable. Can we add a non-reflecting weakly compact subset of $\kappa$ while preserving the great weak compactness of $\kappa$?

\begin{question}
For $n\geq 1$, if $\kappa$ is a greatly $\Pi^1_n$-indescribable cardinal and $\Refl_n(\kappa)$ holds, is there a forcing extension in which $\kappa$ gains a non-reflecting $\Pi^1_n$-indescribable set and $\kappa$ remains greatly $\Pi^1_n$-indescribable? (For $n=0$ the answer is easily seen to be yes using standard methods.)
\end{question}

Mekler and Shelah proved \cite{MR1029909} that the consistency strength of the stationary reflection principle $\Refl(\kappa)$ is strictly between the existence of a greatly Mahlo cardinal and the existence of a $\Pi^1_1$-indescribable, and is equiconsistent with the existence of a reflection cardinal; a regular uncountable cardinal $\kappa$ is a \emph{reflection cardinal} if there exists a proper normal $\kappa$-complete ideal $I$ on $\kappa$ such that $X\in I^+$ implies $\Tr_0(X)\in I^+$. Can a similar result be established for the $\Pi^1_n$-reflection principles? Bagaria and Mancilla determined the consistency strength of the existence of a \emph{$(n+1)$-stationary cardinal}, a notion which is closely related to the $\Pi^1_{n-1}$-reflection principle (see \cite{MR3416912} for a definition). Bagaria and Mancilla proved that the consistency strength of the existence of a $(n+1)$-stationary cardinal is strictly between the existence of a $n$-greatly-Mahlo cardinal and the existence of a $\Pi^1_n$-indescribable cardinal (again see \cite{MR3416912} for definitions). A similar result may be achievable for the $\Pi^1_n$-reflection principles.

\begin{question}
What is the consistency strength of the $\Pi^1_n$-reflection principle? (For $n=0$ this is answered by Mekler and Shelah \cite{MR1029909}.) 
\end{question}

A regular uncountable cardinal $\kappa$ is an \emph{$n$-reflection cardinal} if there exists a proper normal $\kappa$-complete ideal $I$ such that $A\in I^+$ implies $\Tr_n(A)\in I^+$. Is the $\Pi^1_n$-reflection principle $\Refl_n(\kappa)$ equiconsistent with the existence of an $n$-reflection cardinal and strictly between the existence of a greatly $\Pi^1_n$-indescribable cardinal and the existence of a $\Pi^1_{n+1}$-indescribable cardinal?

\begin{question}
What is the relationship between the $n$-stationarity of a cardinal $\kappa$ and the $\Pi^1_n$-reflection principle $\Refl_n(\kappa)$? In $L$, the two notions are equivalent. What about in other models? %\marginpar{\tiny We suspect not, but don't have a proof.}
\end{question}

%\bibliography{../Bib/masterbib}
%\bibliographystyle{alpha}

\end{document}